\theoremstyle{plain}
\newtheorem{thm}{Theorem}[section]
\newtheorem{lem}[thm]{Lemma}
\newtheorem{prop}[thm]{Proposition}
\theoremstyle{rem}
\newtheorem {rem} [thm]{Remark}
\newtheorem{defn}[thm]{Definition}
\numberwithin{equation}{section} 
\newcommand {\R} {\mathbb{R}}
\begin{document}

\title[Inextensible strings]{Generalized solutions for inextensible string equations}

\author[Y. \c Seng\"ul]{Yasemin \c Seng\"ul}
\address{Ozyegin University, Department of Natural and Mathematical Sciences, 34794 Alemda\u{g}, Istanbul, Turkey.}
\email{yasemin.sengul@ozyegin.edu.tr}

\author[D. Vorotnikov]{Dmitry Vorotnikov}
\address{CMUC, Department of
Mathematics, University of Coimbra, 3001-501 Coimbra, Portugal.}
\email{mitvorot@mat.uc.pt}

\thanks{We thank Jos\'{e} Miguel Urbano for continuous support and encouragement. We are grateful to Constantine Dafermos, Barbara Lee Keyfitz, David Kinderlehrer,  L\'{e}onard Monsaingeon, Evgeniy Panov, Ivan Yudin and Arghir Zarnescu for stimulating discussions and/or correspondence. 
}

\keywords{Inextensible string, generalized Young measure, whip, 1-Laplacian, total variation, conservation law, discontinuity, tension}
\subjclass[2010]{35D99, 35L99, 35Q74, 74H20.
} 

\begin{abstract}
We study the system of equations of motion for inextensible strings. This system possesses many internal symmetries, and is related to discontinuous systems of conservation laws and the total variation wave equation.   We prove existence of generalized Young measure solutions with non-negative tension after transforming the problem into a system of conservation laws and approximating it with a regularized system for which we obtain uniform estimates of the energy and the tension. We also discuss sufficient conditions for non-negativity of the tension for strong solutions. 

\end{abstract}

\maketitle
\section{Introduction}\label{sec:intro}

An inextensible string is defined (cf. \cite{Antman}) to be the one for which the stretch is constrained to be equal to $1$, whatever system of forces is applied to it. As in \cite{Reeken-77}, some authors refer to it as a \emph{chain} which is a long but very thin material that is inextensible but completely flexible, and hence mathematically described as a rectifiable curve of fixed length. Dynamics of pipes, flagella, chains, or ribbons of rhythmic gymnastics, mechanism of whips, and galactic motion are only a few phenomena and applications that can be related to inextensible strings (see \cite{BHT,Hanna-Sant-2013,Dickey} for more details).

The motion executed by a homogeneous, inextensible string with unit length and density can be modeled by the system
\begin{equation}\label{e:whip-pde}
\left\{\begin{array}{ll}
\eta_{tt}(t,s) = \big(\sigma(t,s)\,\eta_{s}(t,s) \big)_{s} + g, \,\,s \in [0,1],\\
| \eta_{s} | = 1,
\end{array}\right.
\end{equation}
where $g \in  \mathbb{R}^{3}$ is the given gravity vector, $\eta \in \mathbb{R}^{3}$ is the unknown position vector for material point $s$ at time $t$. The unknown scalar multiplier $\sigma$, which is called \emph{tension}, satisfies the equation 
\begin{equation} \label{e:eqs}
\sigma_{ss}(t, s) - | \eta_{ss}(t, s) |^{2}\,\sigma(t, s) + | \eta_{st}(t, s) |^{2} = 0
\end{equation}
(see Section \ref{s:eqn-sigma} for the derivation of \eqref{e:eqs} from \eqref{e:whip-pde}).
 We are given the initial positions and velocities of the string as
\begin{equation}
\eta(0,s) = \alpha(s)\,\,\,\text{and}\,\,\,\eta_{t}(0,s) = \beta(s). \label{e:ic}
\end{equation} 
There are several options for boundary conditions:\\ 

a) two fixed ends:
\begin{equation}\label{e:consf}
\eta(t,0) = \alpha(0)\,\,\,\text{and}\,\,\,\eta(t,1) =  \alpha(1)
 \end{equation} 

b) two free ends:
\begin{equation}\label{e:condsf}
\sigma(t,0) =\sigma(t,1) = 0
 \end{equation}
 
c) the ``ring'' or periodic conditions (here it is convenient to consider $s\in \mathbb{R}$ instead of $s\in [0,1]$): 
\begin{equation}\label{e:consr}
\eta(t,s) = \eta(t,s+1)\,\,\,\text{and}\,\,\,
\sigma(t,s) =\sigma(t,s+1)
 \end{equation} 

d) the ``whip'' boundary conditions when one end is free and one is fixed: 
\begin{equation}\label{e:conds}
\sigma(t,0) = 0\,\,\,\text{and}\,\,\,\eta(t,1) = 0. 
\end{equation} 
We make the convention that $s = 0$ corresponds to the free end while the end $s = 1$ is fixed at the origin of the space.

Even though the analysis of the dynamics of inextensible strings subject to different kinds of boundary conditions is a notable problem which goes back to Galileo, Leibniz and Bernoulli  (cf. \cite{Reeken-77,Antman,Preston-2011}), and it has been investigated by many authors in various contexts (see e.g. \cite{BHT,Hanna-Sant-2012,Hanna-Sant-2013,Serre,Wong}), there are still very few results about general well-posedness. One of the existence results available is by Reeken \cite{Reeken-I,Reeken-II} who proves well-posedness for an infinite string with gravity when the initial data is near the trivial (downwards vertical) stable stationary solution (close in $H^{26}$). 

Another one is due to Preston \cite{Preston-2011} who considers \eqref{e:whip-pde} in the absence of gravity with the whip boundary conditions \eqref{e:conds}. He obtains local existence and uniqueness in weighted Sobolev spaces for which the energy is bounded. He uses the method of lines, approximating with a discrete system of chains. In his paper, he imagines that the graph of the whip extends smoothly through the origin (which corresponds to the fixed end), and hence the tension extends to an even smooth function. This evenness leads to what he calls the compatibility boundary condition given by 
\begin{equation*}\label{e:comp-cond}
\sigma_{s}(t, 1) = 0.
\end{equation*}
This condition is related to the non-negativity of the tension, which we discuss in Section \ref{s:eqn-sigma}. We consider all possible boundary conditions together with some conditions on the presence of the gravity. To our knowledge, this is the only proof available for the non-negativity of the tension in this context.

There exist some papers where the problem is investigated from a more geometric point of view. In \cite{Preston-2012}, for example, Preston studies on the space of arcs parametrized by the unit speed in the $L^{2}$ metric. He ignores the gravity and extends the curve through the fixed point by oddness to get a curve with two free endpoints.  He notes that if periodic boundary conditions were used, the results of his paper would change, for example, he would work on ordinary Sobolev spaces on the circle, rather than weighted Sobolev spaces on the interval. Dickey \cite{Dickey}, on the other hand, looks into two dimensional case, also ignoring the gravity. He defines a new variable as the angle the tangent to the string makes with the positive $x$-axis, and obtains a transformed system for which he discusses two asymptotic theories, one in which the amplitude of the angle is small and another in which the amplitude is large. 


After certain transformations of \eqref{e:whip-pde} (see Section \ref{ss32}) we obtain the hyperbolic system of conservation laws in \eqref{e:transwithg}. This kind of systems are mentioned in the book by Dafermos \cite[Chapter 7]{Dafermos-book} as examples of balance laws in one space dimension arising in the contexts of planar oscillations of thermoelastic medium and oscillations of flexible, extensible elastic strings. To our knowledge, there is no existence result in this context for conservation laws as well as for the $1$-Laplacian wave equation \eqref{e:one-laplace} which is derived from \eqref{e:whip-pde} by certain transformations (see Section \ref{ss32}).   The difficulty of the problem is not surprising since the system  of conservation laws \eqref{e:claw} is not strictly hyperbolic, and its flux is discontinuous at zero. 

Scalar hyperbolic conservation laws
with a discontinuous flux were recently considered in \cite{Bulicek-etal}. Although the authors of that paper notice that their procedures do not work in the case of systems, we managed to find a slightly similar approach in the case of our particular system \eqref{e:claw}. Note that a related but different class of problems concerns scalar conservation laws with a flux that is discontinuous in the spatial and not in the unknown variable \cite{au2005,panov}.  

We also point out that although our problem has formally nothing to do with fluids, its mathematical and physical behaviours resemble
those of an ideal fluid described by the 3D
Euler equations \cite{tzn,Preston-2011}. Thus, the studies of this ``toy model'' may shed more light on the nature of turbulence \cite{co06}.

The novelty of our paper is three-fold. Firstly, we show global existence of solutions for the equations of motion of the inextensible string without restrictions on the initial data. Secondly and thirdly, this seems to be the first treatment of well-posedness both for the systems of hyperbolic conservation laws with discontinuous flux and for the total variation wave equation. 

We will work with the most complex boundary conditions, namely the ``whip" conditions \eqref{e:conds}, but the results of the paper remain valid for any of \eqref{e:consf},  \eqref{e:condsf} or \eqref{e:consr}. In some places throughout the paper we emphasize the technical differences of those cases with respect to \eqref{e:conds}. Moreover, the three-dimensional space was chosen due to the physical meaning of the problem, but, mathematically, everything presented in the paper is true in any dimension.  

The paper is organized in the following way. In Section 2.1, we introduce the basic notation. In Section 2.2, we make a series of transformations of our problem and obtain a system of hyperbolic conservation laws with discontinuous flux and the total variation wave equation. In Section 2.3, we derive an equivalent system which is more tractable due to the lack of discontinuity. In Section 2.4, we discuss the non-negativity of the tension which is crucial in our considerations. In Section 2.5, we make some preliminary observations related to the energy. In Section 3.1, we recall the main concepts of the theory of generalized Young measures. In Section 3.2, we define the generalized solutions to our system of conservation laws with discontinuous flux. In Section 4.1, we introduce an approximate problem and study its global well-posedness.  In Section 4.2, we define the energy for the approximate problem, and show dissipativity of that problem. This allows us to derive, in Section 4.3, a crucial uniform $L^1$-bound for the tension. In Section 5.1, we prove the main result of the paper, which is the existence of generalized Young measure solutions with non-negative tension to the initial-boundary value problem for the equations of motion of the inextensible string, employing the equivalent continuous formulation introduced in Sections 2.3 and 3.2. Then, in Section 5.2, we touch on some examples which illustrate our key finding that even though strong solutions with non-negative tension do not exist for some initial data, generalized solutions with non-negative tension do exist and can be interpreted from the point of view of mechanics.

\section{Preliminaries}

\subsection{Some conventions}

Throughout the paper we will denote $\Omega = (0,T) \times (0, 1).$ The scalar product of any two vectors $\chi, \xi$ in $\mathbb{R}^3$ is simply denoted by $\chi\xi$, and $|\chi|$ is the Euclidean norm $\sqrt{\chi\chi}$. The notation $Lip_1([0,1];\mathbb{R}^3)$ stands for the set of continuous functions $f:[0,1]\to\mathbb{R}^3$ satisfying 
\[|f(s_1)-f(s_2)|\leq |s_1-s_2|, \ s_1,s_2\in[0,1].\]
The symbol $S^{n-1}$ stands for the unit sphere in $\mathbb{R}^n$, $n\in \mathbb{N}$. $M^+(U)$ and $M^1(U)$ are the spaces of positive finite and probability
measures, respectively, on a closed set $U\subset \mathbb{R}^n.$ $L_{w}^{\infty}(U_1, \mu; \mathcal{M}^{1}(U))$ is the space of $\mu$-weakly*-measurable maps (cf. \cite{Sze}) from an open or closed set
$U_1\subset \mathbb{R}^m$ into $M^1(U)$ (the default measure $\mu$ on $U_1$ is the Lebesgue measure). Generic positive constants are denoted by $C$.  Finally, by \emph{regular solutions} in various contexts we mean sufficiently smooth functions so that all derivatives involved in the associated arguments are continuous.

\subsection{Changes of variables and formal transformations} \label{ss32}\hfill
\vspace{2mm}

\noindent 1. We make an ansatz that $\sigma\geq 0$ (cf. the discussion in Section \ref{s:eqn-sigma}). By putting $\kappa := \sigma\,\eta_{s}$ we get $\sigma =  | \kappa |$ and $\eta_{s} = \frac{\kappa}{| \kappa |}.$
We can then formally rewrite \eqref{e:whip-pde} as
\begin{equation}\label{e:transwithg}
\begin{array}{ll}
{\left\{\begin{array}{ll}
\eta_{tt} = \kappa_{s}+g, \\ \eta_{s} = \displaystyle\frac{\kappa}{| \kappa |}
\end{array}\right.} \qquad \underset{(v := \eta_{t})}{\Longleftrightarrow} \qquad 
{\left\{\begin{array}{ll} v_{t} = \kappa_{s}+g
, \\ v_{s} = \displaystyle\left(\frac{\kappa}{| \kappa |}\right)_{t}
\end{array}\right.}
\end{array}
\end{equation} 
From  \eqref{e:conds} we infer that the boundary conditions for $\kappa$ take the form
\begin{equation}\label{e:bckappa}
\kappa(t,0) = 0\qquad \text{and} \qquad \kappa_{s}(t,1) = - g.
\end{equation}
The second condition follows from 
\begin{equation}\label{e:icv1}
v(t,1) = 0
\end{equation}
(the velocity of the fixed end is zero). 
Note that we can find the initial conditions for $\kappa$ (and $\sigma$) using  \eqref{e:eqs}, \eqref{e:ic} and first condition in \eqref{e:bckappa}. We also observe that 
\begin{equation}\frac{\kappa}{| \kappa |} (0,s)= \eta_s(0,s)=\alpha_{s}(s) \end{equation} 
and \begin{equation}\label{e:vic}v(0,s)= \eta_t(0,s) = \beta(s). \end{equation}

\noindent  2. If $\alpha(1)=0$, then by using \begin{equation}\label{e:eta1} \eta(t,s) = \alpha(s) + \int_{0}^{t} v(r,s)\,dr \quad \text{and} \quad   \sigma=|\kappa|,\end{equation} we can come back from the ``velocity $v$ 
-- contact force $\kappa$'' formulation \eqref{e:transwithg}--\eqref{e:vic} to the original ``position $\eta$ -- tension $\sigma$'' setting \eqref{e:whip-pde}, \eqref{e:ic}, \eqref{e:conds}.

\noindent  3. Let $\Upsilon=(v,\kappa)\in\mathbb{R}^6$, and define the map $F:\mathbb{R}^6\times[0,T]\to\mathbb{R}^6$, $(v,\kappa,t)\mapsto \left(\frac{\kappa}{| \kappa |},v - g t\right)$. Then \eqref{e:transwithg} can be rewritten in the form 
\begin{equation} 
 \label{e:claw} \Upsilon_s=[F(\Upsilon,t)]_t. 
\end{equation}
This is a system of conservation laws with discontinuous flux $F$, where $s$ plays the role of time and $t$ plays the role of space. 

\noindent  4. Let us now further define 
\begin{equation}\label{e:phi}
\phi(t,s) : = \int_{0}^{t} \kappa(z,s)\,dz.
\end{equation}
From \eqref{e:transwithg} we get
\[\int_{0}^{t} v_{t} \,dt = \int_{0}^{t} \kappa_{s}\,dt + \int_{0}^{t} g \,dt\]
which, by \eqref{e:vic} and \eqref{e:phi}, gives
\begin{equation}\label{e:forsigmabound}
v = \phi_{s} + g\,t + \beta.
\end{equation}
Together with \eqref{e:transwithg} this leads to 
\begin{equation}\label{e:one-laplace}
\phi_{ss} (t,s) + \beta_{s} = \left(\frac{\phi_{t}}{| \phi_{t} |}\right)_{t} = \Delta_{1} \phi(t,s).
\end{equation}
This is the $1$-Laplacian wave equation which can also be called the \emph{total variation wave equation}. The latter in a different context might be relevant in image processing, cf. \cite{va04,chan05}. Here, once again, $s$ plays the role of time and $t$ plays the role of space. 

The initial/boundary conditions for $\phi$ are
\begin{subequations}\label{e:conds-phi}
\begin{align}
& \phi(t,0) = 0\,\,\,\text{and}\,\,\,\phi_{s}(t,1) = - g\,t, \label{e:bc-phi}  \\
& \phi(0,s) = 0\,\,\,\text{and}\,\,\,\phi_{t}(0,s) = \kappa(0,s) \label{e:ic-phi}.
\end{align}
\end{subequations}
5. Since $|\eta_s|=1$, a necessary assumption for existence of regular solutions is \begin{equation}\label{e-al1}|\alpha_s|=1.\end{equation} Differentiating the equation $|\eta_s|^2=1$ with respect to time we get $\eta_s\eta_{st}=0$, yielding the second necessary condition \begin{equation}\label{e-al2} \alpha_s\beta_s=0.\end{equation}
  
\subsection{Removing the discontinuity} \label{ss25} By the transformation $u = \eta_{s} \sqrt{\sigma}, v = \eta_{t}$ we obtain
\begin{equation} 
{\left\{\begin{array}{ll}\label{e:quadr-eqn}
v_{t} = \big(u\,| u | \big)_{s}+g, \\ v_{s} = \displaystyle\left(\frac{u}{| u |}\right)_{t}
\end{array}\right.}
\end{equation}
Defining $\xi = (v,u)$ we can rewrite this in the form
\begin{equation*}
\Phi(\xi)_{t} = \Psi(\xi)_{s}+(g,0) \quad \text{where} \quad 
{\left\{\begin{array}{ll}
\Phi(\xi) = \displaystyle\left(v,\frac{u}{| u |} \right) \\ \Psi(\xi) = \big(u\,| u |,v \big)
\end{array}\right.}
\end{equation*}
Let $\mathcal{P}:\mathbb{R}^6\to\mathbb{R}^6$ be the projection $\xi\mapsto (0,u)$.

Inspired by the implicit constitutive theory (cf. \cite{Bulicek-etal}), we proceed as follows. We define 
\[\Gamma(\xi) := \left(v,\frac{u}{| u |} + u\right). \]
Then, for $\gamma=(v,w)$, we consider 
\[\Gamma^{-1}(\gamma) = \left(v,\mathcal{M}(w) \right),\]
where 
\[\mathcal{M}(w) : = {\left\{\begin{array}{ll}
0 \quad \text{for}\quad | w | \leq 1, \\ w - \frac{w}{| w |}\quad \text{for}\quad | w | \geq 1.
\end{array}\right.}\]
Taking the derivative of $\Phi(\xi)$ with respect to time, we find
$$\Gamma(\xi)_{t} - \mathcal{P}\xi_{t} = \Psi(\xi)_{s}+(g,0),$$ whence $$ \Gamma(\Gamma^{-1}(\gamma))_{t} -  [\mathcal{P}(\Gamma^{-1}(\gamma))]_{t} = \Psi(\Gamma^{-1}(\gamma))_{s}+(g,0).$$
We formally conclude that
\[\gamma_{t} - [\mathcal{P}(\Gamma^{-1}(\gamma))]_{t} = \Psi(\Gamma^{-1}(\gamma))_{s}+(g,0).\]
Defining 
\[{\left\{\begin{array}{ll}
\mathcal{A}(\gamma) = \gamma - \mathcal{P}(\Gamma^{-1}(\gamma)),\\ \mathcal{B}(\gamma) = \Psi(\Gamma^{-1}(\gamma))
\end{array}\right.}\]
we obtain
\begin{equation}\label{e:cont-eqn}
\mathcal{A}(\gamma)_{t} =\mathcal{B}(\gamma)_{s}+(g,0).
\end{equation}

Rather surprisingly, the new equation is equivalent to the original system \eqref{e:whip-pde} coupled with the additional restriction \begin{equation} \label{e:sg0} \sigma\geq 0,\end{equation} provided the solutions are regular and some natural compatibility conditions hold.  Indeed, it is straightforward to check that for any solution $(\eta,\sigma)$ of the system \eqref{e:whip-pde},\eqref{e:sg0}, the corresponding vector function $$\gamma=(v,w)=(\eta_t,\eta_s(1+\sqrt\sigma))$$ satisfies \eqref{e:cont-eqn}. 

We now assume that \begin{equation}\label{e:ca1} |w(0,s)|\geq 1,\ w(0,s) v_s(0,s)=0 \end{equation} for all $s\in[0,1]$. In Section \ref{ss72} we will realize that this is a necessary and legitimate assumption. We now take any regular solution $\gamma=(v,w)$ to \eqref{e:cont-eqn}. At the relative interior of the set $\{(t,s)\in\overline\Omega:\ |w(t,s)|\geq 1\}$,  
letting $\kappa=
\frac w {|w|}
(|w|-1)^2,
$ we obtain \begin{equation} \label{e:vkap}
{\left\{\begin{array}{ll} v_{t} = \kappa_{s}+g
, \\ v_{s} = \displaystyle\left(\frac{w}{| w |}\right)_{t}
\end{array}\right.}
\end{equation} 
Since
\[1= \left| \frac{w}{| w |} \right|^{2},\]
differentiating it with respect to time gives
\begin{equation} \label{e:ort} 0 = \left( \frac{w}{| w |} \right)_{t}\,\frac{w}{| w |} = v_{s}\,\frac{w}{| w |}.\end{equation}
Assume that there is a point $(t_0,s_0)$ such that $|w(t_0,s_0)|<1$. Let $\Omega_1$ be the connected component of the set $\{(t,s)\in\overline\Omega:\, |w(t,s)|<1\}$ containing $(t_0,s_0)$, and let $t_1=\inf\{t\geq 0: (t,s_0)\in\Omega_1\}$. If $t_1=0$ then \begin{equation}\label{e:ca2} |w(t_1,s_0)|\geq 1,\ w(t_1,s_0) v_s(t_1,s_0)=0 \end{equation}  due to \eqref{e:ca1}, and if $t_1>0$ then \eqref{e:ca2} follows from $\eqref{e:ort}$ by continuity. For $(t,s_0)\in \Omega_1$, the solution to \eqref{e:cont-eqn} can be written explicitly as
\[v(t,s_0)=(t-t_1)g+v(t_1,s_0),\, w(t,s_0)=(t-t_1)v_s(t_1,s_0)+w(t_1,s_0).\] By the Pythagorean theorem, \[|w(t,s_0)|\geq |w(t_1,s_0)|\geq 1,\] arriving at a contradiction.

Consequently, $|w|\geq 1$ on $\overline\Omega$, and thus \eqref{e:vkap} holds everywhere. By \eqref{e:vkap}, there exists a vector function $\eta$ such that $\eta_s=\frac w {|w|}$ and $\eta_t=v$. This function $\eta$ solves the system \eqref{e:whip-pde},\eqref{e:sg0} with $\sigma=(|w|-1)^2$. Note that $\eta$ is determined up to a constant unless initial or boundary conditions are specified. 

By the above analysis, we have killed the discontinuity since $\mathcal{A}$ and $\mathcal{B}$ are both continuous with $\mathcal{A}$ being sublinear and $\mathcal{B}$ having at most quadratic growth. We will therefore proceed in the same way amid the weak formulation of our problem in Section \ref{ss72}.

\subsection{The equation for the tension}\label{s:eqn-sigma}

Differentiating the constraint $| \eta_{s} |^{2} = 1$ with respect to $s$ shows that $\eta_s\eta_{ss}=0$. 
Hence, multiplying the first equation in \eqref{e:whip-pde} by $\eta_s$ we get
\begin{equation}\label{e:estr}\eta_{s} \,\eta_{tt} = \sigma_{s}+g\eta_s.\end{equation}
Now, differentiating $| \eta_{s} |^{2} = 1$ twice with respect to time we obtain
\[\eta_{s} \, \eta_{stt} + \eta_{st} \, \eta_{st} = 0.\]
Due to \eqref{e:estr},
\[\eta_{ss}\,\eta_{tt} + \eta_{s}\,\eta_{stt} = \sigma_{ss}+g\eta_{ss}.\]
Combining these two equations we get
\[\sigma_{ss} - (\eta_{tt}-g) \eta_{ss}+ | \eta_{st} |^{2} = 0.\]
Expressing $(\eta_{tt}-g)$ by \eqref{e:whip-pde}, we end up with
\begin{equation}\label{e:sigma-eqn}
\sigma_{ss} - | \eta_{ss} |^{2}\,\sigma + | \eta_{st} |^{2} = 0.
\end{equation}

\begin{prop}
Let $(\eta,\sigma)$ be a regular solution to \eqref{e:whip-pde}, \eqref{e:ic} with one of the boundary conditions \eqref{e:consf}--\eqref{e:conds}.  Assume that one of the following assumptions holds:\\
\indent \emph{(i)}\,\, the boundary condition is \eqref{e:condsf} or \eqref{e:consr};\\
\indent \emph{(ii)}\,\, the boundary condition is \eqref{e:conds} and $g= 0$;\\
\indent \emph{(iii)}\,\,the boundary condition is \eqref{e:consf}, $|\alpha(0)-\alpha(1)|<1$ and $g= 0$.\\
Then $\sigma \geq 0$ for all times.
\end{prop}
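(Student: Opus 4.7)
The plan is to treat \eqref{e:sigma-eqn} as a second-order linear ODE in $s$ at each frozen time $t$, with nonnegative zeroth-order coefficient $|\eta_{ss}|^2$ and nonpositive right-hand side $-|\eta_{st}|^2$, and to show that its negative part $\tau(t,\cdot):=\max(-\sigma(t,\cdot),0)$ must vanish. Multiplying \eqref{e:sigma-eqn} by $\tau$, using $\sigma\tau=-\tau^2$ and $\sigma_s\tau_s=-\tau_s^2$ almost everywhere, and integrating by parts in $s$ over $[0,1]$ yields
\[\int_0^1 \tau_s^2\, ds+\int_0^1|\eta_{ss}|^2\,\tau^2\, ds+\int_0^1 |\eta_{st}|^2\,\tau\, ds = -\bigl[\sigma_s\tau\bigr]_{s=0}^{s=1}\]
for every $t$. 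The strategy is to verify, case by case, that the boundary term on the right vanishes; then each of the three nonnegative integrals on the left is forced to be zero.

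The first step is thus to translate the various boundary prescriptions for $\eta$ into boundary information for $\sigma$. Condition \eqref{e:condsf} gives $\sigma(t,0)=\sigma(t,1)=0$ outright. Condition \eqref{e:consr} is periodic, producing no boundary contribution. For a fixed end in \eqref{e:conds} or \eqref{e:consf} one has $\eta_{tt}=0$, so \eqref{e:whip-pde} with $g=0$ yields $(\sigma\,\eta_s)_s=0$ there; taking the inner product with $\eta_s$ and invoking $|\eta_s|=1$ together with $\eta_s\eta_{ss}=0$ then gives $\sigma_s=0$ at that end. This is where the assumption $g=0$ enters in (ii) and (iii), and in each case it makes $[\sigma_s\tau]_0^1$ vanish.

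Once the three integral terms above are zero, $\tau$ is constant in $s$ at every $t$, and it remains to show this constant is zero. Under \eqref{e:condsf} and \eqref{e:conds} this is immediate since $\tau$ vanishes at an endpoint. Under \eqref{e:consr}, if $\tau$ were a positive constant then $|\eta_{ss}|^2\equiv 0$ would force $\eta(t,\cdot)$ to be affine in $s$, and periodicity would then give $\eta_s\equiv 0$, contradicting $|\eta_s|=1$. Under \eqref{e:consf} the same rigidity yields $\eta(t,s)=\alpha(0)+s(\alpha(1)-\alpha(0))$, so $|\eta_s|=|\alpha(1)-\alpha(0)|<1$, again contradicting $|\eta_s|=1$.

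I expect the main delicacy to be the two-fixed-ends case \eqref{e:consf}, where the boundary data alone do not prescribe $\sigma$ at $s=0,1$ and the conclusion $\tau\equiv 0$ is rescued only by combining the geometric hypothesis $|\alpha(0)-\alpha(1)|<1$ with the rigidity forced by $|\eta_{ss}|\equiv 0$. A secondary technical point is the justification of the integration by parts for the Lipschitz function $\tau$, which is routine because $\sigma$ is regular, so $\tau$ belongs to $H^1(0,1)$ at each $t$.
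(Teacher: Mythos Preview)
Your proof is correct and takes a genuinely different route from the paper. The paper argues via the classical maximum principle and Hopf lemma (citing Protter--Weinberger): assuming a negative minimum of $\sigma(t,\cdot)$, the inequality $\sigma|\eta_{ss}|^2-\sigma_{ss}\geq 0$ forces either $\sigma(t,\cdot)$ to be a negative constant or the minimum to lie on $\partial[0,1]$ with a strict sign on $\sigma_s$ there; each alternative is then ruled out by the boundary data combined with the identity $\sigma_s=-g\eta_s$ at fixed ends. Your argument replaces this by the energy trick of testing \eqref{e:sigma-eqn} against the negative part $\tau=(-\sigma)_+$, which turns the same information into the vanishing of three nonnegative integrals plus a boundary term.

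The trade-offs are as one would expect. The paper's approach is conceptually transparent---it is just the maximum principle---but relies on an external reference for the dichotomy and the boundary-point lemma. Your approach is entirely self-contained and perhaps more robust if one later wants to weaken the regularity assumptions on $(\eta,\sigma)$, since it only needs $\tau\in H^1(0,1)$ and the Stampacchia-type identity $\sigma_s\tau_s=-\tau_s^2$ a.e., both of which you correctly flag. The two proofs use precisely the same boundary information (the identity $\sigma_s=-g\eta_s$ at fixed ends when $g=0$, and the geometric rigidity $|\eta_{ss}|\equiv 0\Rightarrow$ straight string to exclude the constant-negative case under \eqref{e:consr} and \eqref{e:consf}); only the interior mechanism differs.
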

\begin{proof}
Assume that, for some $t$, the minimum of $\sigma(t,\cdot)$ is negative.  Note that from \eqref{e:sigma-eqn} we have
\[\sigma\,| \eta_{ss} |^{2}  -  \sigma_{ss} \geq 0.\]
By the maximum principle \cite{Protter1984}, either $\sigma(t,\cdot)$ is a negative constant, or the minimum is achieved at $s=0$ or $1$. 

The first alternative is impossible for \eqref{e:condsf} and \eqref{e:conds}, and in the remaining cases it implies $\,| \eta_{ss}(t,\cdot) |\equiv 0$, so the string should be straight, and thus   \[|\eta(t,0)-\eta(t,1)|=1.\]  This obviously contradicts \eqref{e:consr}, whereas \eqref{e:consf} would yield $|\alpha(0)-\alpha(1)|=1$. 

The second alternative can only hold \cite{Protter1984} provided $\sigma_s(t,0)> 0$ (if the minimum is at $0$) or $\sigma_s(t,1)< 0$ (if the minimum is at $1$). This immediately rules out the periodic case, so the negative minimum can only be achieved at fixed ends. But \eqref{e:estr} implies that at such points $\sigma_s=-g\eta_s$, and we again arrive at a contradiction.
\end{proof}

This proof implies that, for the ``whip'' boundary condition \eqref{e:conds}, instead of assuming that the gravity is zero, it suffices to know a priori that $g\eta_s(t,1) \leq 0$, whereas, for two fixed ends \eqref{e:consf}, it suffices to know a priori that $g\eta_s(t,0) \geq 0$ and $g\eta_s(t,1) \leq 0$. We believe that there exist much weaker hypotheses which guarantee non-negativity of the tension. Our conjecture is that, for both \eqref{e:consf} and \eqref{e:conds}, if $\sigma_0(s):=\sigma(0,s)\geq 0$ for all $s\in [0,1]$, then $\sigma\geq 0$ on $\overline\Omega$. Remember that $\sigma_0$ is determined by $\alpha$, $\beta$ and the boundary conditions. For example, in the ``whip'' case \eqref{e:conds} it is the solution of the problem \begin{equation}({\sigma_0})_{ss} - | \alpha_{ss} |^{2}\,\sigma_0 + | \beta_{s} |^{2} = 0, \ \sigma_0(0)=0,\ (\sigma_0)_s(1)=-g\alpha_s(1).\end{equation} 
However, for non-zero gravity, $\sigma$ can be negative at the initial moment of time and even for all times. For instance, \eqref{e:whip-pde} has an unstable stationary solution \begin{equation}\label{e:ss} \eta_u(s)=\alpha_u(s)=(s-1)\frac g{|g|},\ \sigma_u(s)=-|g|s,\end{equation}
which satisfies both  \eqref{e:consf} and \eqref{e:conds}.

Nevertheless, our ansatz $\sigma\geq 0$ is meaningful even for such ``unstable'' problems as \eqref{e:whip-pde}, \eqref{e:ic}, \eqref{e:conds} with the initial data \begin{equation} \label{e:unst} \alpha=\alpha_u,\,\beta=0. \end{equation} There exist objects which can be interpreted as generalized solutions to this problem with non-negative tension. We will get back to this example in Section \ref{s:52}.

\subsection{Conservation of energy}

We define the kinetic and potential energies as 
\begin{equation}\label{e:energies}
K(t) = \frac{1}{2} \int_{0}^{1} |\eta_{t}|^{2}\,ds \quad \text{and} \quad P(t) = - \int_{0}^{1} g\,\eta\,ds.
\end{equation}

\begin{prop}\label{l:energy-cons} Let $(\eta,\sigma)$ be a regular solution to \eqref{e:whip-pde}, \eqref{e:ic} with one of the boundary conditions \eqref{e:consf}--\eqref{e:conds}. Then
the total energy $E(t) := K(t) + P(t)$ is conserved.
\end{prop}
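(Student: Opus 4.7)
The plan is to differentiate $E(t)$ in time, substitute the evolution equation \eqref{e:whip-pde} to kill the gravity contribution, and then use an integration by parts against the constraint $|\eta_s|=1$ to reduce the time derivative to pure boundary terms. These boundary terms will then vanish by a short case-by-case inspection of \eqref{e:consf}--\eqref{e:conds}.

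First I would compute, for a regular solution, that
\[
\frac{dE}{dt} \;=\; \int_{0}^{1}\eta_t\,\eta_{tt}\cd s \;-\; \int_{0}^{1} g\,\eta_t\cd s.
\]
Using the first line of \eqref{e:whip-pde} to replace $\eta_{tt}$ by $(\sigma\eta_s)_s+g$, the two gravity contributions cancel, leaving
\[
\frac{dE}{dt} \;=\; \int_{0}^{1}\eta_t\,(\sigma\eta_s)_s\cd s.
\]
An integration by parts in $s$ gives
\[
\frac{dE}{dt}\;=\;\bigl[\sigma\,\eta_t\,\eta_s\bigr]_{s=0}^{s=1}\;-\;\int_0^1 \sigma\,\eta_{ts}\,\eta_s\cd s.
\]
Differentiating the pointwise identity $|\eta_s|^2=1$ with respect to $t$ yields $\eta_s\,\eta_{ts}=0$, so the remaining integral vanishes identically.

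It then remains to check that the boundary term $[\sigma\,\eta_t\,\eta_s]_0^1$ is zero under each of the admissible boundary conditions. Under \eqref{e:consf} the positions at $s=0$ and $s=1$ are time-independent, so $\eta_t$ vanishes at both endpoints; under \eqref{e:condsf} the scalar $\sigma$ vanishes at both endpoints; under the periodicity condition \eqref{e:consr} the values at $s=0$ and $s=1$ coincide and cancel; under the ``whip'' condition \eqref{e:conds} we have $\sigma(t,0)=0$ and $\eta_t(t,1)=0$ (differentiating the constant boundary value $\eta(t,1)=0$). In every case the boundary contribution is zero, hence $dE/dt\equiv 0$ and $E$ is conserved.

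The only subtle point is the very last one: there is nothing hard here beyond verifying that each set of boundary conditions indeed forces one of the two factors $\sigma$ or $\eta_t$ to vanish at each endpoint; the regularity assumption on $(\eta,\sigma)$ is used implicitly to justify both the integration by parts in $s$ and the differentiation of the constraint $|\eta_s|^2=1$.
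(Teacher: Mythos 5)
Your proposal is correct and follows essentially the same route as the paper: differentiate the energy, use the equation to cancel the gravity terms, integrate by parts, kill the bulk term via $\eta_s\eta_{ts}=0$ (from differentiating $|\eta_s|^2=1$ in time), and check the boundary terms case by case. No gaps.
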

\begin{proof}
From \eqref{e:whip-pde} we have 
\begin{eqnarray*}
& & \frac{d}{dt} (K(t)+P(t))=\frac{1}{2} \frac{d}{dt} \int_{0}^{1} |\eta_{t}|^{2}\,ds  -\int_{0}^{1} g\,\eta_{t}\,ds= \\
&& \qquad =  \int_{0}^{1} \eta_{tt}\,\eta_{t}\,ds -\int_{0}^{1} g\,\eta_{t}\,ds  =\int_{0}^{1} (\sigma\eta_s)_{s}\,\eta_{t}\,ds\\
&& \qquad = \sigma(t,1)\,\eta_{s}(t,1)\eta_{t}(t,1) - \sigma(t,0)\,\eta_{s}(t,0)\eta_{t}(t,0)- \int_{0}^{1}\sigma\, \eta_{s}\eta_{ts}\,ds.
\end{eqnarray*}
The third term is identically zero as observed in the end of Section \ref{ss32}. The first two terms vanish if $0$ and $1$ are either free or fixed ends. In the periodic case their difference is still zero. 
\end{proof}

In the absence of the gravity, as also mentioned in \cite{Dickey}, the energy of the whip is entirely kinetic due to the fact that it cannot stretch or compress and hence cannot store energy. In this case from Proposition \ref{l:energy-cons} we obtain that 
\begin{equation*}
\int_{0}^{1} | \eta_{t}(t,s) |^{2} \,ds = \int_{0}^{1} | \eta_{t}(0, s) |^{2} \,ds=2E(0),\ t > 0 .
\end{equation*}
In the general case, we have 
\begin{eqnarray} \label{e:qaestim}
\frac 1 2 \int_{0}^{1} | \eta_{t}(t,s) |^{2} \,ds &=& \frac 1 2 \int_{0}^{1} | \eta_{t}(0, s) |^{2} \,ds + \int_{0}^{1} g(\eta-\alpha)\,ds \nonumber \\
 &=& E(0)+ \int_{0}^{1} \,g\eta\,ds.
\end{eqnarray}
If the initial energy is finite, with the help of Gr\"onwall's lemma \eqref{e:qaestim} implies
\begin{equation} \label{e:aestim}
\int_{0}^{1} | \eta_{t}(t,s) |^{2} \,ds \leq C,\ t \in [0,T].
\end{equation}
(cf. the reasoning in Section \ref{uee}). When at least one end is fixed, the potential energy is a priori bounded because of $|\eta_{s} | = 1$, and thus $C$ in \eqref{e:aestim} does not depend on $T$. 

\section{Setting in the context of Young measures} \label{sect3}
\subsection{Introduction}

We will essentially follow \cite{Wiedemann-thesis} for a basic introduction to the generalized Young measures. 

Let $m,l,d\in\mathbb{N}$, $p \in [1, +\infty)$, $\Gamma \subset \mathbb{R}^{m}$ be an open set. We define $\mathcal{F}_{p}$ as the collection of continuous functions $f \colon \Gamma \times \mathbb{R}^{l} \to \mathbb{R}^d$ for which the limit 
\[f^{\infty}(x,z) := \underset{\substack{x' \to x \\ z' \to z \\ s \to \infty}}{\lim} \frac{f(x', s z')}{s^{p}}\]
exists for all $(x,z) \in \overline{\Gamma} \times \mathbb{R}^{l}$ and is continuous in $(x,z)$. The function $f^{\infty}$ is called the \emph{$L^{p}$-recession function} of $f$. Note that it is $p$-homogeneous in $z$, i.e., $f^{\infty}(x,r z ) = r^{p} f^{\infty}(x,z)$ for all $r \geq 0$. 


A generalized Young measure on $\mathbb{R}^{l}$ with parameters in $\Gamma$ is defined as a triple $(\nu, \lambda, \nu^{\infty})$ such that 
\begin{gather}
\nu \in L_{w}^{\infty}(\Gamma; \mathcal{M}^{1}(\mathbb{R}^{l}) ), \nonumber \\
\lambda \in \mathcal{M}^{+}(\overline{\Gamma}), \nonumber \\
\nu^{\infty} \in L_{w}^{\infty}(\overline{\Gamma}, \lambda; \mathcal{M}^{1}(S^{l-1})). \nonumber
\end{gather}
Note that $\nu$ is defined Lebesgue-a.e. on $\Gamma$, and $\nu^{\infty}$ is defined $\lambda$-a.e. on $\overline{\Gamma}$; $\nu$ is called the \emph{oscillation measure}, $\lambda$ is the  \emph{concentration measure} and $\nu^{\infty}$ is the \emph{concentration-angle measure}.

Now, we can state the fundamental theorem on generalized Young measures (see \cite{Ali-Bouc,DiPerna-Majda,Kris-Rind,Wiedemann-thesis}):

\begin{thm} \label{t:Young}
Let $\{w_{n}\}\subset L^{p}(\Gamma;\mathbb{R}^l)$ be an $L^p$-bounded sequence of maps. Then there exists a subsequence (not relabeled) and a generalized Young measure $(\nu, \lambda, \nu^{\infty})$ such that, for every $f \in \mathcal{F}_{p}$,
\[\int_{\Gamma}f(x, w_{n}(x)) dx \rightarrow \int_{\Gamma}\langle \nu_{x}, f(x, \xi) \rangle dx + \int_{\overline{\Gamma}} \langle \nu_{x}^{\infty}, f^{\infty}(x, \theta) \rangle \lambda(dx),\]
where \[\langle \nu_{x}, f(x, \xi) \rangle = \int_{\mathbb{R}^{l}} f(x, \xi) \nu_{x}(d\xi), \ \langle \nu_{x}^{\infty}, f^{\infty}(x, \theta) \rangle = \int_{S^{l - 1}} f^{\infty}(x, \theta) \nu^{\infty}(d\theta).\]
\end{thm}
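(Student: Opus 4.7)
The plan is to reduce the statement to weak-$*$ compactness of positive Radon measures on a compact metric space, after lifting the target $\mathbb{R}^l$ to a radial compactification. Compactify via $T\colon\mathbb{R}^l\to B^l$, $T(z):=z/(1+|z|)$, and attach the sphere $S^{l-1}$ at ``infinity'' to obtain $\overline{B^l}$. For $f\in\mathcal{F}_p$ set
\[
\tilde f(x,y):=(1-|y|)^p\,f\!\Bigl(x,\tfrac{y}{1-|y|}\Bigr)\ \text{on }\overline\Gamma\times B^l,\qquad \tilde f(x,\theta):=f^{\infty}(x,\theta)\ \text{on }\overline\Gamma\times S^{l-1}.
\]
Writing $y=r\omega$ with $r\to 1^-$, the defining limit of $f^{\infty}$ and its continuity show that $\tilde f\in C(\overline\Gamma\times\overline{B^l})$, so $f\mapsto\tilde f$ embeds $\mathcal{F}_p$ into this Banach space.

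Next, associate to each $w_n$ the positive Radon measure
\[
\mu_n:=(1+|w_n(x)|)^p\,dx\otimes \delta_{Tw_n(x)}(dy)
\]
on $\overline\Gamma\times\overline{B^l}$. A one-line substitution, using $(1-|Tw_n|)^p(1+|w_n|)^p\equiv 1$ and $Tw_n/(1-|Tw_n|)=w_n$, gives the identity $\int_\Gamma f(x,w_n(x))\,dx=\int \tilde f\,d\mu_n$ for every $f\in\mathcal{F}_p$, and the $L^p$-bound on $\{w_n\}$ bounds $\mu_n(\overline\Gamma\times\overline{B^l})$ uniformly. Since $\overline\Gamma\times\overline{B^l}$ is compact metric, $C(\overline\Gamma\times\overline{B^l})$ is separable and Banach--Alaoglu extracts a subsequence with $\mu_n\stackrel{*}{\rightharpoonup}\mu\in\mathcal{M}^+(\overline\Gamma\times\overline{B^l})$, whence $\int_\Gamma f(x,w_n)\,dx\to\int \tilde f\,d\mu$ for every $f\in\mathcal{F}_p$.

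It then remains to repackage $\mu$ as $(\nu,\lambda,\nu^{\infty})$. Split $\mu=\mu^{\mathrm{int}}+\mu^{\mathrm{bdy}}$ according to whether $y\in B^l$ or $y\in S^{l-1}$. The key identity
\[
\pi_*\bigl((1-|y|)^p\mu\bigr)=dx\big|_\Gamma,
\]
obtained by testing against $g\in C(\overline\Gamma)$ and passing to the weak-$*$ limit of $\int g(x)(1-|Tw_n|)^p(1+|w_n|)^p\,dx=\int g\,dx$, shows that $(1-|y|)^p\mu^{\mathrm{int}}$ disintegrates as $\mu'_x(dy)\otimes dx$ with $\mu'_x\in\mathcal{M}^1(B^l)$; setting $\nu_x:=T^{-1}_{\#}\mu'_x\in\mathcal{M}^1(\mathbb{R}^l)$ and evaluating $\tilde f$ over $B^l$ produces precisely $\int_\Gamma\langle\nu_x,f(x,\cdot)\rangle\,dx$. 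For the boundary piece, set $\lambda:=\pi_*\mu^{\mathrm{bdy}}\in\mathcal{M}^+(\overline\Gamma)$ and disintegrate $\mu^{\mathrm{bdy}}=\nu^{\infty}_x(d\theta)\otimes\lambda(dx)$ with $\nu^{\infty}_x\in\mathcal{M}^1(S^{l-1})$; since $\tilde f=f^{\infty}$ on $S^{l-1}$ the contribution is $\int_{\overline\Gamma}\langle\nu^{\infty}_x,f^{\infty}(x,\cdot)\rangle\,d\lambda(x)$. Summing the two pieces yields the claimed identity. The principal obstacle is the last step: the identity $\pi_*((1-|y|)^p\mu)=dx|_\Gamma$ is the delicate engine that cleanly separates oscillation from concentration and guarantees that $\nu_x$ is a \emph{probability} measure Lebesgue-a.e.~on $\Gamma$, while continuity of $\tilde f$ on $\overline\Gamma\times\overline{B^l}$ hinges critically on the joint continuity and $p$-homogeneity built into the recession function.
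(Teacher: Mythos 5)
The paper itself gives no proof of Theorem \ref{t:Young}; it is quoted from the literature (\cite{Ali-Bouc,DiPerna-Majda,Kris-Rind,Wiedemann-thesis}). Your argument is a correct, self-contained rendition of exactly the standard DiPerna--Majda compactification proof on which those references rest: the lift $\tilde f$ to $C(\overline\Gamma\times\overline{B^l})$, the measures $\mu_n$ with the identity $\int_\Gamma f(x,w_n)\,dx=\int\tilde f\,d\mu_n$, the marginal identity $\pi_*\bigl((1-|y|)^p\mu\bigr)=dx|_\Gamma$, and the two disintegrations are all sound. The one hypothesis you use without stating it is that $\Gamma$ is bounded (so that $\overline\Gamma\times\overline{B^l}$ is compact and $|\Gamma|<\infty$); this holds in the paper's application $\Gamma=\Omega=(0,T)\times(0,1)$ and should be made explicit.
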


\begin{rem} \label{remme} {\rm In particular, for $f(x,\xi)=|\xi|^p$ we infer that $$\|w_n\|_{L^p(\Gamma)^l}\to\int_{\Gamma} \langle \nu_{x}, | \xi |^{p} \rangle dx +\lambda(\overline\Gamma)$$ in view of $f^\infty\equiv 1$ on $S^{l-1}$.}\end{rem}

\subsection{Weak setting of the incompressible string problem}

\label{ss72}

Consider the problem of finding a velocity field $v$ and a contact force $\kappa$, which was derived in Section \ref{ss32} from the original problem \eqref{e:whip-pde}, \eqref{e:ic}, \eqref{e:conds}:
\begin{subequations}\label{e:weak-setting}
\begin{align}
& v_{t} = \kappa_{s} + g \label{e:v_t-1}, \\
& v_{s} = \left(\frac{\kappa}{| \kappa |}\right)_{t} \label{e:v_s-1}, \\
& \kappa \vert_{s = 0} = 0 \label{e:bc(s)-kappa-1},\\
& \frac{\kappa}{| \kappa |} \Big\vert_{t = 0} = \alpha_{s} \label{e:bc(t)-kappa-1}, \\
& v \vert_{s = 1} = 0, \label{e:bc(s)-v-1}\\
& v \vert_{t = 0} = \beta.\label{e:bc(t)-v-1}
\end{align}
\end{subequations}
Let us define the auxiliary function $h_{0} \colon \overline{\mathbb{R}}_{+} \to \overline{\mathbb{R}}_{+}$ with $h_{0}(r) = 1 + \sqrt{r}.$ Then we have
\[h_{0}^{-1}(r) = (r - 1)^{2}, \ r \geq 1 ,\]
and we can continue $h_{0}^{-1}$ by zero for $r \leq 1.$ We also define $H_{0}, H_{0}^{*} \colon \mathbb{R}^{3} \to \mathbb{R}^{3}$ as
\begin{equation*}
H_{0}(\chi) = \frac{\chi}{| \chi |} h_{0}^{-1}( | \chi | ), \,\,\, H_{0}(0) = 0, \,\,\, H_{0}^{*}(\chi) = \frac{\chi}{| \chi |} \sqrt{h_{0}^{-1}(| \chi |)}, \,\,\, H_{0}^{*}(0) = 0. 
\end{equation*}
Let \[w = \frac{\kappa}{|\kappa|} + \frac{\kappa}{\sqrt{|\kappa|}} = h_{0}(|\kappa|) \frac{\kappa}{|\kappa|}.\]
Then $\kappa = H_{0}(w)$ and 
 \[\frac{\kappa}{|\kappa|}= \frac{\kappa}{|\kappa|}+\frac{\kappa}{\sqrt{|\kappa|}}-\frac{w}{| w |} \sqrt{|\kappa|}=w-\frac{w}{| w |}  \sqrt{h_{0}^{-1}(|w|)}=w - H_{0}^{*}(w),\] so we can rewrite \eqref{e:v_t-1} and \eqref{e:v_s-1} as 
\begin{subequations}
\begin{align}
& v_{t} = (H_{0}(w))_{s} + g \label{e:v_t-2}, \\
& v_{s} =  \left(w - H_{0}^{*}(w) \right)_{t}. \label{e:v_s-2}
\end{align}
\end{subequations}

In Section \ref{ss25} we showed that this system was equivalent to \eqref{e:whip-pde}, \eqref{e:sg0}. Observe that, in the current setting, \eqref{e:ca1} is a consequence of the compatibility conditions \eqref{e-al1} and \eqref{e-al2}. Indeed, $$|w(0,s)|\geq |w(0,s) - H_{0}^{*}(w(0,s))|=|\alpha_s(s)|=1,$$
$$w(0,s)v_s(0,s)=|w(0,s)|\alpha_s(s)\beta_s(s)=0.$$

Define the space $\tilde{C}^{\infty}(\overline{\Omega})$ of test functions to be the set of pairs $\varphi = (\phi, \psi)$, $\phi, \psi \in C^{\infty}(\overline{\Omega}; \mathbb{R}^{3})$ such that
\begin{subequations}
\begin{align}
& \phi \vert_{s = 1} = 0,\quad \phi_{s}\vert_{s = 0} = 0, \quad \phi \vert_{t = T} = 0, \\
& \psi \vert_{s = 0} = 0,\quad \psi_{s}\vert_{s = 1} = 0, \quad \psi \vert_{t = T} = 0 .
\end{align}
\end{subequations}
Take any $\varphi = (\phi, \psi) \in \tilde{C}^{\infty}(\overline{\Omega})$. Multiplying \eqref{e:v_t-1} (or \eqref{e:v_t-2}) by $\phi$ and integrating in space and time gives
\begin{equation}\label{e:tomerge-1}
\int_{\Omega} v \phi_{t} \,ds \,dt = \int_{\Omega} H_{0}(w) \phi_{s} \,ds \,dt - \int_{0}^{1} \beta \phi\vert_{t = 0} \,ds - \int_{\Omega} g \phi \,ds \,dt.
\end{equation}
Doing the same with \eqref{e:v_s-1} (or \eqref{e:v_s-2}) and $\psi$ gives
\begin{equation}\label{e:tomerge-2}
\int_{\Omega} [w - H_{0}^{*}(w)] \psi_{t} \, ds\,dt = \int_{\Omega} v \psi_{s} \,ds \,dt + \int_{0}^{1} \alpha \psi_s\vert_{t = 0} \,ds.
\end{equation}
Observe that we have taken into account \eqref{e:bc(s)-kappa-1} - \eqref{e:bc(t)-v-1}, and the setting \eqref{e:tomerge-1} - \eqref{e:tomerge-2} already incorporates the initial and boundary conditions. We also used the assumption \begin{equation} \label{e-al3} \alpha(1) = 0.\end{equation}

Denote $\gamma = (v,w) \in \mathbb{R}^{6}$, and define functions $\mathcal{A}, \mathcal{B} \colon \mathbb{R}^{6} \to \mathbb{R}^{6}$ such that
\begin{align}
& \mathcal{A}(\gamma) = \mathcal{A}(v,w) = (v, w - H_{0}^{*}(w)), \\
& \mathcal{B}(\gamma) = \mathcal{B}(v,w) = (H_{0}(w), v),
\end{align}
and also the operator
\begin{eqnarray*}
 \Xi_{0}(\alpha, \beta, \varphi) & = & \Xi_{0}(\alpha, \beta, \phi, \psi) \\
& = &  - \int_{0}^{1} \beta \phi \vert_{t = 0} \,ds + \int_{0}^{1}\alpha \psi_{s} \vert_{t = 0} \,ds - \int_{\Omega} g \phi \,ds\,dt .
\end{eqnarray*}
Then \eqref{e:tomerge-1} and \eqref{e:tomerge-2} can be merged to get
\begin{equation}\label{e:merged}
\int_{\Omega} \mathcal{A}(\gamma) \varphi_{t} \,ds\,dt = \int_{\Omega} \mathcal{B}(\gamma) \varphi_{s}\,ds\,dt + \Xi_{0}(\alpha, \beta, \varphi).
\end{equation}
Observe that $\mathcal{A}$ and $\mathcal{B}$ are in the class $\mathcal{F}_{2}.$ Moreover, since $\mathcal{A}$ is sublinear, $\mathcal{A}^{\infty} \equiv 0,$ whereas it can be checked that $\mathcal{B}^{\infty}(v, w) = (w |w|, 0)$.

These considerations and analogy with \cite{Bre-DL-Sz,DiPerna-Majda,Sze,Wiedemann-thesis} suggest:
\begin{defn}
A triple $(\nu, \lambda, \nu^{\infty})$ with
\begin{align}
&\nu \in L^{\infty}_{w}(\Omega; \mathcal{M}^{1}(\mathbb{R}^{6})), \\
&\lambda  \in \mathcal{M}^{+}(\overline{\Omega}), \\
& \nu^{\infty} \in L^{\infty}_{w}(\overline{\Omega}, \lambda; \mathcal{M}^{1}(S^{5})),
\end{align}
is an admissible Young measure solution to \eqref{e:weak-setting} provided the energy-tension bound \begin{equation}\label{admc} 
 \int_{\Omega} \langle\nu_{t,s},
  | \xi |^{2}\rangle\,ds \,dt +\lambda(\overline\Omega)
  \leq \Theta 
\end{equation} holds, where $\Theta$ is a certain constant depending only on $T$, $g$, and the $L^2$-norms of $\alpha$ and $\beta$, and 
\begin{eqnarray}\label{d:YMS}
&& \int_{\Omega} \langle \nu_{t,s} , \mathcal{A}(\xi) \rangle \varphi_{t}(t,s)\,ds \,dt  = \int_{\Omega} \langle \nu_{t,s} , \mathcal{B}(\xi)\rangle \varphi_{s}(t,s)\,ds \,dt  \\
&& \qquad \qquad \qquad  + \int_{\overline{\Omega}} \langle \nu_{t,s}^{\infty}, \mathcal{B}^{\infty}(\theta) \rangle \varphi_{s}(t,s) \lambda(d t, d s) + \Xi_{0}(\alpha, \beta, \varphi) \nonumber
\end{eqnarray}
for every $\varphi \in \tilde{C}^{\infty}(\overline{\Omega})$. 
\end{defn}

\begin{rem} {\rm If an admissible Young measure solution $(\nu, \lambda, \nu^{\infty})$ satisfies $\,\nu_{t,s}=\delta_{\gamma(t,s)}$ a.e. in $\Omega$, where $\gamma:\Omega\to \R^6$ is a measurable function and $\delta$ is the Dirac delta, and $\lambda=0$, then $\gamma$ belongs to $L^2(\Omega;\mathbb{R}^6)$ and is a weak solution in the sense of \eqref{e:merged}. Assume now that $\gamma$ is a regular function on $\overline\Omega$ and the compatibility conditions \eqref{e-al1}, \eqref{e-al2} and \eqref{e-al3} hold. Then \eqref{e:merged} yields \eqref{e:cont-eqn}, and, as in Section \ref{ss25}, the vector function $\gamma$ generates a pair $(\eta,\sigma)$ satisfying \eqref{e:whip-pde}, \eqref{e:sg0}. Since $\eta$ is determined up to a constant, we can choose it to satisfy $\eta(0,1)=0$. Then, similarly to our previous considerations, we can check that the initial and boundary conditions \eqref{e:ic}, \eqref{e:conds} are met. }
\end{rem}

\begin{rem} {\rm The arguments in Sections \ref{s:wp} and \ref{s:mt} provide a rigorous expression for $\Theta$. } \end{rem}

\begin{rem} {\rm An important open problem is the one of uniqueness of regular solutions to \eqref{e:whip-pde}. The upward whip anomaly (see Sections \ref{s:eqn-sigma} and \ref{s:52}) hints that it should be more rational to study the issue of uniqueness for \eqref{e:whip-pde} coupled with \eqref{e:sg0} (equipped with suitable initial and boundary conditions, either in a strong form, e.g.,  \eqref{e:ic}, \eqref{e:conds}, or in a weak form, e.g.,  \eqref{e:merged}). A positive answer to this question is the cornerstone for such possible developments in the studies of the inextensible string equations as existence of dissipative solutions \cite{blions,v12} and their relation with the Young measure ones, or discovery of additional admissibility constraints in the definition of Young measure solutions which would secure weak-strong uniqueness \cite{Bre-DL-Sz} for \eqref{e:cont-eqn}. }
\end{rem}


\section{Well-posedness and uniform bounds for the approximate problem}\label{s:wp}

\subsection{Global regularity}

Let $\epsilon \in (0,1]$ be a constant and consider the problem 
\begin{subequations}\label{e:app-pr-v-kappa}
\begin{align}
& v_{t}   = \epsilon v_{ss} + \kappa_{s}+g, \label{e:v_t(eps)-1} \\
& v_{s}   = \left( \epsilon \kappa + \frac{\kappa}{\sqrt{\epsilon + | \kappa |^{2}}}\right)_{t} - \epsilon \left(\epsilon \kappa + \frac{\kappa}{\sqrt{\epsilon + | \kappa |^{2}}} \right)_{ss} \label{e:v_s(eps)},\\
& \kappa \vert_{s = 0}  = 0  \label{e:bc(eps)-s}, \\
& \left(\epsilon \kappa + \frac{\kappa}{\sqrt{\epsilon + | \kappa |^{2}}}\right) \Bigg\vert_{t = 0} = \alpha_{s}, \\
& v \vert_{s = 1} = 0, \\
& v \vert_{t = 0} = \beta, \\
& \left(\epsilon \kappa + \frac{\kappa}{\sqrt{\epsilon + | \kappa |^{2}}}\right)_{s} \Bigg\vert_{s = 1} = 0, \\
& v_{s} \vert_{s = 0} = 0. \label{e:bc(eps)-e} 
\end{align}
\end{subequations}
Denote $\tau = \epsilon \kappa + \frac{\kappa}{\sqrt{\epsilon + | \kappa |^{2}}}.$ Then, $\kappa = G(\tau)$, where $G$ is a function with positive-semidefinite Jacobian matrix, and $G(0) = 0$. Moreover, observe that the eigenvalues of $\nabla G^{-1}
(\kappa)$ are $\epsilon+\frac{\epsilon}{(\epsilon + | 
\kappa |^{2})^{3/2}}$ and $\epsilon+\frac{1}{(\epsilon + | \kappa |
^{2})^{1/2}}$. Thus, the eigenvalues of $\nabla 
G(\tau)$ are \[\Lambda_1(\tau)=\frac 1{\epsilon+(\epsilon + | 
G(\tau) |^{2})^{-1/2}}\geq \frac 1{\epsilon+\epsilon^{-1/2}}\] 
and \[\Lambda_2(\tau)=\frac 
{\epsilon^{-1}}{1+(\epsilon + | G(\tau) |^{2})^{-3/2}}\leq \epsilon^{-1}.\]
In particular, $G$ is globally Lipschitz. 
Observe also that 
\begin{equation*}
|\kappa|\geq 1 \quad \Rightarrow \quad |\tau|\geq \epsilon + (1+\epsilon)^{-1/2}> 1,
\end{equation*} and, consequently, 
\begin{equation}\label{e:intau}
|\tau|\leq 1 \quad \Rightarrow \quad |G(\tau)|< 1.
\end{equation}
We can rewrite the problem \eqref{e:app-pr-v-kappa} as
\begin{subequations}\label{e:appr-pr}
\begin{align}
& v_{t}   = \epsilon v_{ss} + (G(\tau))_{s}+g, \label{e:v_t(eps)-2} \\
& \tau_{t}   = v_{s} + \epsilon \tau_{ss}, \label{e:tau_t} \\
& \tau \vert_{s = 0}  = 0,\quad \tau_{s} \vert_{s = 1} = 0, \\
& v \vert_{s = 1}  = 0,\quad v_{s} \vert_{s = 0} = 0, \\
& \tau \vert_{t = 0}  = \alpha_{s},\quad v \vert_{t = 0} = \beta .
\end{align}
\end{subequations}

\begin{thm}\label{t:exist-app}
Let $\alpha, \beta \in C^{3}([0,1]; \mathbb{R}^{3})$, $\alpha_{s}(0) = 0$, $\alpha_{ss}(1) = 0$, $\beta_{s}(0) = 0$, $\beta(1) = 0$. Then there exists a unique solution $(v, \tau)$ to \eqref{e:appr-pr} in the class $C^{\infty}((0,T] \times [0,1]; \mathbb{R}^{6}) \times C(\overline{\Omega}; \mathbb{R}^{6}).$
\end{thm}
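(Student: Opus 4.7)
System \eqref{e:appr-pr} is a semilinear parabolic system whose leading part on each component is $\partial_t-\epsilon\partial_{ss}$ with mixed Dirichlet/Neumann boundary conditions, coupled through first spatial derivatives of the globally Lipschitz map $G$. My plan is standard: analytic-semigroup local existence, a dissipative energy identity for global a priori control, parabolic bootstrap to rule out blow-up, and parabolic smoothing for interior $C^\infty$ regularity.

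\textbf{Local existence.} Setting $U=(v,\tau)$, let $A=-\partial_{ss}\otimes I_6$ act on $L^2((0,1);\R^6)$ with domain encoding $v(1)=v_s(0)=0$ and $\tau(0)=\tau_s(1)=0$; then $A\geq 0$ is self-adjoint and $-\epsilon A$ generates an analytic semigroup. Rewrite \eqref{e:appr-pr} as $U'+\epsilon AU=F(U)+(g,0)$, $U(0)=(\beta,\alpha_s)$, with $F(v,\tau)=\bigl((G(\tau))_s,\,v_s\bigr)$. Since $G$ is globally Lipschitz and $C^\infty$ (the eigenvalues of $\nabla G$ lie in $[(\epsilon+\epsilon^{-1/2})^{-1},\,\epsilon^{-1}]$ and the radial profile extends smoothly through $0$), $F:D(A^{1/2})\to L^2$ is Lipschitz on bounded sets. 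Standard semilinear analytic-semigroup theory (Pazy, Henry) then yields a unique mild solution $U\in C([0,T^*);D(A^{1/2}))$ on a maximal interval, with the blow-up alternative $\|U(t)\|_{H^1}\to\infty$ as $t\uparrow T^*<T$.

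\textbf{Energy identity and global existence.} Radiality of $G$ gives $G=\nabla H$ with $H(\tau)=\int_0^{|\tau|}g_0(r)\,dr\geq 0$ comparable to $|\tau|^2$ (with $\epsilon$-dependent constants), where $g_0(|\tau|):=|G(\tau)|$. Testing \eqref{e:v_t(eps)-2} against $v$ and \eqref{e:tau_t} against $G(\tau)$, every boundary contribution vanishes because $v(1)=0$, $v_s(0)=0$, $\tau_s(1)=0$, and $\tau(0)=0\Rightarrow G(\tau)(0)=0$; summing the two identities yields
\[ \frac{d}{dt}\!\int_0^1\!\bigl(\tfrac12|v|^2+H(\tau)\bigr)ds+\epsilon\!\int_0^1\!|v_s|^2\,ds+\epsilon\!\int_0^1\!\tau_s^\top\nabla G(\tau)\tau_s\,ds=\int_0^1 g\cdot v\,ds. \]
Gronwall's inequality then produces uniform bounds on $\|v\|_{L^\infty(L^2)}$, $\|\tau\|_{L^\infty(L^2)}$, $\|v_s\|_{L^2(\Omega)}$ and, via coercivity of $\nabla G$, $\|\tau_s\|_{L^2(\Omega)}$. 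Combined with $|\nabla G|\leq\epsilon^{-1}$, this places $F(U)\in L^2(\Omega;\R^6)$; maximal parabolic regularity applied to each linear heat equation separately (with $H^1$ initial data ensured by the compatibilities $\alpha_s(0)=\alpha_{ss}(1)=\beta_s(0)=\beta(1)=0$) then puts $U$ in $L^2(0,T^*;H^2)\cap H^1(0,T^*;L^2)\hookrightarrow L^\infty(0,T^*;H^1)$, contradicting the blow-up alternative and giving $T^*=T$.

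\textbf{Smoothness, uniqueness, and main obstacle.} For $t>0$, iterating parabolic regularity (treating $F(U)$ as a source of ever-increasing smoothness, with compatibility of the mixed boundary conditions preserved under differentiation in $t$) yields $U\in C^\infty((0,T]\times[0,1];\R^6)$, while continuity of $U$ up to $t=0$ comes from the mild-solution formulation in $C([0,T];D(A^{1/2}))\hookrightarrow C(\overline{\Omega};\R^6)$. Uniqueness follows by writing the equation for the difference of two solutions, which is a linear parabolic system with Lipschitz right-hand side, and applying the same energy estimate together with Gronwall. I expect the main technical care to be needed in closing the bootstrap: it relies simultaneously on the strict positivity $\nabla G\geq \Lambda_1(\tau)\,I>0$ (to control $\tau_s$ from the dissipation) \emph{and} on the upper bound $|\nabla G|\leq\epsilon^{-1}$ (to keep $F(U)$ in $L^2$). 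Both degenerate as $\epsilon\downarrow 0$, which is precisely why subsequent sections must pass to Young-measure limits rather than strong ones.
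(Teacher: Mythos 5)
Your argument is correct, but it takes a genuinely different route from the paper. The paper's proof is a two-line sketch: it rewrites $(G(\tau))_s$ as $\tilde G(t,s)\,\tau_s$ with $\tilde G$ a bounded matrix, invokes Redlinger's theorem to obtain an a priori $L^\infty$ bound on $(v,\tau)$, and then cites Amann's abstract parabolic theory for global existence, uniqueness and smoothness. You instead build everything from $L^2$-based tools: analytic-semigroup local existence in $D(A^{1/2})$, the dissipative energy identity (which is in fact exactly the computation the paper performs later, in Section 4.2, to get the $\epsilon$-uniform bounds \eqref{e:energy-1}--\eqref{e:energy-2} --- a nice economy), and then maximal $L^2$ regularity to upgrade the $L^2_tH^1_s$ control of the source to an $L^\infty_t H^1_s$ bound that defeats the blow-up alternative. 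Your route exploits the one-dimensionality of the spatial domain (so that $H^1$ control is as good as $L^\infty$ control) and the fixed-$\epsilon$ global Lipschitz bound $|\nabla G|\le\epsilon^{-1}$, and it is more self-contained; the paper's route is shorter on the page and yields an $L^\infty$ bound that is robust in higher dimensions. Both correctly lean on the two-sided bounds $\Lambda_1,\Lambda_2$ on $\nabla G$. One small caveat: for uniqueness the cross-term cancellation you used in the energy identity is not directly available for the difference of two solutions (since $G(\tau_1)-G(\tau_2)$ is not a gradient in the difference variable); you should instead test the $\tau$-difference equation with the difference itself and absorb $\int (G(\tau_1)-G(\tau_2))\,V_s\,ds$ into $\epsilon\int|V_s|^2\,ds$ using $|\nabla G|\le \epsilon^{-1}$, which closes Gronwall with an $\epsilon$-dependent constant. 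This is routine and does not affect the validity of your plan.
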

\begin{proof}\emph{(Sketch)}
The term $(G(\tau))_{s}$ can be written as $\nabla G(\tau) \tau_{s} = \tilde{G}(t, x) \tau_{s}$, where $\tilde{G}$ is a bounded matrix-valued function. By \cite[Theorem 2]{Redlinger} we obtain an $L^{\infty}$-bound on the solution. The result follows from \cite[Theorems 14.6 and 15.5, Corollary 14.7]{Amann}.
\end{proof}

\subsection{Uniform energy estimates}
\label{uee}

Hereafter in Section \ref{s:wp} we assume that 
\begin{equation} \label{e:as} 
|\alpha_s(s)|\leq 1 \quad \text{for} \quad 0\leq s\leq 1,
\end{equation} 
that 
\begin{equation*}
 \alpha \vert_{s = 1} = 0,
 \end{equation*} and that there is a constant $C_*$ such that 
\begin{equation} \label{e:ab} 
\int_{0}^{1} |\alpha|^2(s) \,ds+\int_{0}^{1} |\beta|^2(s) \,ds \leq C_*.
\end{equation}
 Multiplying \eqref{e:v_t(eps)-2} by $v$ and integrating with respect to $s$ gives
\begin{eqnarray*}
& & \int_{0}^{1} v_{t} v \,ds  =  \epsilon \int_{0}^{1} v_{ss} v \,ds+ \int_{0}^{1} (G(\tau))_{s} v \,ds +\int_{0}^{1}g v \,ds \\
& & \quad = - \epsilon \int_{0}^{1} v_{s} v_{s} \,ds - \int_{0}^{1} G(\tau) v_{s} \,ds+\int_{0}^{1}g v \,ds \\
&  & \quad = - \epsilon \int_{0}^{1} v_{s} v_{s} \,ds + \epsilon \int_{0}^{1} G(\tau) \tau_{ss} \,ds - \int_{0}^{1} G(\tau) \tau_{t} \,ds+\int_{0}^{1}g v \,ds .
\end{eqnarray*}
Hence, 
\begin{eqnarray} \label{e:ener1} 
 - \epsilon \int_{0}^{1} v_{s} v_{s} \,ds  &=& \int_{0}^{1} v_{t} v \,ds  -\int_{0}^{1}g v \,ds  \nonumber \\ & & \qquad + \epsilon \int_{0}^{1} \nabla G(\tau) \tau_{s} \tau_{s} \,ds + \int_{0}^{1} G(\tau) \tau_{t} \,ds.
\end{eqnarray}
Considering the last term,
\begin{eqnarray*}
& & \int_{0}^{1} G(\tau) \tau_{t} \,ds =  \int_{0}^{1} \kappa \left(\epsilon \kappa + \frac{\kappa}{\sqrt{\epsilon + | \kappa |^{2}}}\right)_{t} \,ds \\
& & \qquad = \epsilon \int_{0}^{1} \kappa \kappa_{t} \,ds + \int_{0}^{1} \kappa \left(\frac{\kappa}{\sqrt{\epsilon + | \kappa |^{2}}}\right)_{t} \,ds \\
& & \qquad = \epsilon \int_{0}^{1} \kappa \kappa_{t} d\,s +\int_{0}^{1} \kappa \frac{\kappa_{t}}{\sqrt{\epsilon + | \kappa |^{2}}} ds - \int_{0}^{1} \frac{| \kappa |^2  \kappa}{(\sqrt{\epsilon + | \kappa |^{2}})^{3}}\kappa_{t} \,ds \\
& & \qquad = \epsilon \int_{0}^{1} \kappa \kappa_{t} ds + \epsilon \int_{0}^{1} \frac{\kappa \kappa_{t}}{(\sqrt{\epsilon + | \kappa |^{2}})^{3}} \,ds \\
& & \qquad = \epsilon \,\frac{d}{dt} \int_{0}^{1} \Big(\frac{| \kappa |^{2}}{2} - \frac{1}{\sqrt{\epsilon + | \kappa |^{2}}}\Big) \,ds .
\end{eqnarray*}
Let 
\begin{equation}\label{e:eta} 
\eta(t,s) = \alpha(s) + \int_{0}^{t} v(r,s)\,dr
\end{equation} 
and define the energy as
\begin{eqnarray*}
E_{\epsilon}(t) & =& \frac{1}{2} \int_{0}^{1} |v|^{2} \,ds -\int_{0}^{1}g \eta \,ds+ \frac{\epsilon}{2}  \int_{0}^{1} |\kappa|^{2} \,ds \\ 
& & \quad \quad +\sqrt{\epsilon} - \epsilon \int_{0}^{1} \frac{1}{\sqrt{\epsilon + | \kappa |^{2}}} \,ds+ \epsilon \int_{0}^{t}\int_{0}^{1} \nabla G(\tau) \tau_{s} \tau_{s} \,ds\,dt.
\end{eqnarray*}
Then \eqref{e:ener1} yields
\[(E_{\epsilon})_{t} = - \epsilon \int_{0}^{1} v_{s} v_{s} \,ds  \leq 0.\]
The initial energy 
\begin{eqnarray*} 
& & E_{\epsilon}(0) = \frac{1}{2} \int_{0}^{1} |\beta|^{2} \,ds -\int_{0}^{1}g \alpha \,ds+ \frac{\epsilon}{2}  \int_{0}^{1} |G(\alpha_s)|^{2} \,ds  \\ 
& & \qquad \qquad \qquad \qquad +\sqrt{\epsilon}- \epsilon \int_{0}^{1} \frac{1}{\sqrt{\epsilon + | G(\alpha_s)|^{2}}} \,ds.
\end{eqnarray*} 
is bounded due to \eqref{e:intau}, \eqref{e:as}, \eqref{e:ab}.
Therefore,
\begin{eqnarray}\label{e:energy-1}
& & \frac{1}{2} \int_{0}^{1} |v|^{2} \,ds + \frac{\epsilon}{2}  \int_{0}^{1} |\kappa|^{2} \,ds + \epsilon \int_{0}^{t}\int_{0}^{1} \nabla G(\tau) \tau_{s} \tau_{s} \,ds\,dt \nonumber  \\ 
&& \qquad \qquad \qquad \qquad \qquad \leq C+\int_{0}^{1}g \eta \,ds\leq C.
\end{eqnarray}
Note that the second inequality follows from the first one and the Gr\"onwall's lemma since 
\[\frac{d}{dt} \int_{0}^{1}g \eta \,ds = \int_{0}^{1}g v \,ds\leq \frac{1}{2}\int_{0}^{1}|v|^2 \,ds +\frac{1}{2}\int_{0}^{1}|g|^2 \,ds\leq \int_{0}^{1}g \eta \,ds + C.\]
Finally, we deduce that
 \begin{eqnarray}\label{e:energy-2}
 \frac 1 {1+\epsilon^{-3/2}} \int_{0}^{T}\int_{0}^{1} |\tau_{s}|^2 \,ds\,dt & \leq & 
\epsilon \int_{0}^{T}\int_{0}^{1} \Lambda_1(\tau) |\tau_{s}|^2 \,ds\,dt \nonumber \\ 
&\leq& \epsilon \int_{0}^{T}\int_{0}^{1} \nabla G(\tau) \tau_{s} \tau_{s} \,ds\,dt \leq C.
\end{eqnarray}

\subsection{Estimate for the tension}

The estimate obtained in this section, together with the one for kinetic energy, is crucial for the rest of the analysis. We let
\begin{equation}\label{e:zeta} 
\zeta(t,s) = \int_{1}^{s} \tau(t,w)\,dw.
\end{equation}
From \eqref{e:tau_t} we find \begin{equation*}  \tau(t,s) = \eta_{s}(t,s) + \epsilon \int_{0}^{t} \tau_{ss}(r,s) \,dr.\end{equation*} Consequently, 
 \begin{equation}  \label{e:sig1}
  \zeta(t,s) = \eta(t,s) + \epsilon \int_{0}^{t} \tau_{s}(r,s) \,dr. 
\end{equation} 
From \eqref{e:eta} we get
\begin{equation}  \label{e:sig2}
 (|\eta|^{2})_{tt} = 2 \eta_{tt} \eta + 2 \eta_{t} \eta_{t} = 2 v_{t} \eta + 2 |v|^{2},
 \end{equation} 
and from \eqref{e:v_t(eps)-2} we obtain
\begin{eqnarray} \label{e:sig3}
\int_{0}^{1} v_{t} \zeta \,ds & = & \epsilon \int_{0}^{1} v_{ss} \zeta \,ds + \int_{0}^{1} (G(\tau))_{s} \zeta \,ds+\int_{0}^{1} g \zeta \,ds  \nonumber \\
& = & - \epsilon \int_{0}^{1} v_{s} \tau \,ds - \int_{0}^{1} G(\tau) \tau \,ds +\int_{0}^{1} g \zeta \,ds .
\end{eqnarray}
Combining  \eqref{e:zeta} -- \eqref{e:sig3}, we infer
\begin{eqnarray}\label{e:estimate} 
& &  \int_{0}^{1} G(\tau) \tau\,ds  \nonumber = \\
&& \quad =  -\epsilon\int_{0}^{1} v_{t} \left[\int_{0}^{t} \tau_{s}(r,s) \,dr\right] \,ds - \int_{0}^{1} \left(\frac{|\eta|^{2}}{2}\right)_{tt} \,ds + \int_{0}^{1} |v|^{2} \,ds  \nonumber \\ 
 & & \qquad \qquad - \epsilon \int_{0}^{1}v_{s} \tau\,ds +\int_{0}^{1} g \eta \,ds +\epsilon\int_{0}^{1} g \left[\int_{0}^{t} \tau_{s}(r,s) \,dr\right] \,ds \nonumber \\
 && \quad = : I_1(t)+I_2(t)+I_3(t)+I_4(t)+I_5(t)+I_6(t).
\end{eqnarray}
The time integral of the first integral is 
\begin{eqnarray}\label{e:i1}
\int_{0}^{T} I_1(t)\,dt  &=& - \epsilon\int_{0}^{T}\int_{0}^{1} v_{t} \left[\int_{0}^{t} \tau_{s}(r,s) \,dr\right] \,ds \, dt \nonumber \\ 
&=& - \epsilon \int_{0}^{1} v\Big\vert_{t=T} \left[\int_{0}^{T} \tau_{s}(r,s) \,dr\right] \,ds  +\epsilon\int_{0}^{T}\int_{0}^{1} v  \tau_{s}\,ds \, dt \nonumber \\
& \leq & \frac{1}{2}\int_{0}^{1} |v|^2\Big\vert_{t=T}\,ds + \frac{\epsilon^2}{2} \int_{0}^{1} \left[\int_{0}^{T} \tau_{s}(r,s) \,dr\right]^2 \,ds \nonumber\\ 
& & \qquad +\frac{1}{2}\int_{0}^{T}\int_{0}^{1} |v|^2\,ds \, dt +\frac{\epsilon^2}{2}\int_{0}^{T}\int_{0}^{1} |\tau_{s}|^2\,ds \, dt.
\end{eqnarray}
The first and third terms are bounded by the energy estimate \eqref{e:energy-1}, and the second and the fourth ones are bounded by $C\epsilon^2(1+\epsilon^{-3/2})$ due to \eqref{e:energy-2}.

For the second integral in \eqref{e:estimate} we have
\begin{equation*}
\begin{split}
\int_{0}^{T} I_2(t)\,dt & = -\int_{0}^{T} \int_{0}^{1} \left(\frac{|\eta|^{2}}{2}\right)_{tt} \,ds\, dt \\ & = -\int_{0}^{1}  \left(\frac{|\eta|^{2}}{2}\right)_{t}\Big\vert_{t = T}\, ds + \int_{0}^{1}  \left(\frac{|\eta|^{2}}{2}\right)_{t}\Big\vert_{t = 0} \,ds \\
& = -\int_{0}^{1} \eta \eta_{t}\Big\vert_{t = T}\,ds + \int_{0}^{1} \alpha \beta \,ds \\
& \leq \frac{1}{2}\int_{0}^{1} |\eta_{t}|^{2}\Big\vert_{t = T} \,ds + \frac{1}{2}\int_{0}^{1} |\eta|^{2}\Big\vert_{t = T} \,ds +  \int_{0}^{1} \alpha \beta \,ds.
\end{split}
\end{equation*}
Here, the first integral is bounded by \eqref{e:energy-1}; the second integral is bounded since the linear operator $v\mapsto \eta$, i.e., $v(t)\mapsto \alpha+\int_{0}^{t} v(r)\,dr$, is bounded in the Banach space $L^{\infty}(0,T; L^{2}(0,1;\mathbb{R}^3))$; the third integral is bounded due to \eqref{e:ab}.

Continuing from \eqref{e:estimate}, $I_3$ and $I_5$ are bounded by the energy bound \eqref{e:energy-1}, and \[\int_{0}^{T} I_4(t)\,dt=\epsilon \int_{0}^{T}\int_{0}^{1}v \tau_s\,ds\,dt\leq C+C\epsilon^2(1+\epsilon^{-3/2})\] as in \eqref{e:i1}. Finally, 
\begin{eqnarray}\label{e:i6}
\int_{0}^{T} I_6(t)\,dt  & =&  \epsilon\int_{0}^{T}\int_{0}^{1} g \left[\int_{0}^{t} \tau_{s}(r,s) \,dr\right] \,ds \, dt \nonumber \\ 
&=&  -\epsilon\int_{0}^{T}\int_{0}^{1} (t-T)g  \tau_{s}\,ds \, dt \nonumber \\ 
& \leq& \frac{1}{2}\int_{0}^{T}\int_{0}^{1} |(t-T)g|^2\,ds \, dt +\frac{\epsilon^2}{2}\int_{0}^{T}\int_{0}^{1} |\tau_{s}|^2\,ds \, dt \nonumber \\ 
& \leq& C+C\epsilon^2(1+\epsilon^{-3/2}).
\end{eqnarray}
Therefore, from \eqref{e:estimate} we conclude that 
\begin{equation*}
\int_{0}^{T} \int_{0}^{1} G(\tau) \tau \,ds\leq C,
\end{equation*}
whence
\begin{equation*}
\int_{0}^{T} \int_{0}^{1} \kappa \left(\epsilon \kappa + \frac{\kappa}{\sqrt{\epsilon + | \kappa |^{2}}}\right) \,ds\, dt \leq C. 
\end{equation*}
Thus, 
\begin{eqnarray}
\int_{\Omega} |\kappa(t,s)| \,ds\, dt &\leq&
C+\int_{\Omega,\ |\kappa|\geq 1} |\kappa| \,ds\, dt \nonumber \\ 
&\leq & C+\int_{\Omega,\ |\kappa|\geq 1} (\epsilon+(1+\epsilon)^{-1/2})|\kappa| \,ds\, dt \nonumber  \\ 
&\leq & C+\int_{\Omega,\ |\kappa|\geq 1}  \left(\epsilon |\kappa| + \frac{|\kappa|}{\sqrt{\epsilon + | \kappa |^{2}}}\right)|\kappa| \,ds\, dt \nonumber \\
& \leq& C+ \int_{\Omega} \kappa \left(\epsilon \kappa + \frac{\kappa}{\sqrt{\epsilon + | \kappa |^{2}}}\right) \,ds\, dt \leq C. 
\end{eqnarray}

\section{Existence of the Young measure solution} \label{s:5}
\subsection{Main theorem} \label{s:mt}
\begin{thm} \label{maint}
Given a pair $\alpha\in Lip_{1}([0,1]; \mathbb{R}^{3}),$ $ \beta \in L^{2}(0,1; \mathbb{R}^{3})$with $\alpha(1) = 0,$ there exists an admissible Young measure solution to \eqref{e:weak-setting}.
\end{thm}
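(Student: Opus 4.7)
The plan is to obtain the admissible Young measure solution as a subsequential limit of the regularized solutions from Section \ref{s:wp}. First I approximate $(\alpha,\beta)$ by smooth sequences $(\alpha_\epsilon,\beta_\epsilon)$ --- obtained by mollification together with a small shift enforcing $\alpha_\epsilon(1)=0$ --- so that the hypotheses of Theorem \ref{t:exist-app} and the standing assumptions \eqref{e:as}--\eqref{e:ab} hold along the sequence; in particular $|\alpha_{\epsilon,s}|\leq 1$ is preserved because convolution with a positive mollifier respects the Lipschitz-one constraint. Theorem \ref{t:exist-app} then supplies regular solutions $(v_\epsilon,\tau_\epsilon)$ to \eqref{e:appr-pr}, with $\kappa_\epsilon=G(\tau_\epsilon)$; the estimates of Sections \ref{uee} and 4.3 furnish the uniform bounds $\|v_\epsilon\|_{L^\infty(0,T;L^2)} + \|\kappa_\epsilon\|_{L^1(\Omega)} \leq C$.

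Next I set $w_\epsilon := h_0(|\kappa_\epsilon|)\,\kappa_\epsilon/|\kappa_\epsilon|$ where $\kappa_\epsilon\neq 0$, and $w_\epsilon:=0$ otherwise. By construction $\kappa_\epsilon = H_0(w_\epsilon)$ and $\kappa_\epsilon/|\kappa_\epsilon| = w_\epsilon - H_0^*(w_\epsilon)$ pointwise, while the estimate $|w_\epsilon|^2\leq 2+2|\kappa_\epsilon|$ makes $\gamma_\epsilon:=(v_\epsilon,w_\epsilon)$ uniformly bounded in $L^2(\Omega;\mathbb{R}^6)$. Passing to a subsequence, Theorem \ref{t:Young} produces a generalized Young measure $(\nu,\lambda,\nu^\infty)$ generated by $\gamma_\epsilon$, and Remark \ref{remme} applied to $f(x,\xi)=|\xi|^2$ yields the admissibility bound \eqref{admc} with $\Theta$ controlled by the uniform $L^2$ norm.

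For each $\varphi=(\phi,\psi)\in\tilde{C}^\infty(\overline{\Omega})$ I test \eqref{e:v_t(eps)-1} against $\phi$ and \eqref{e:v_s(eps)} against $\psi$; integration by parts in both variables, using the boundary conditions in Theorem \ref{t:exist-app} together with the vanishing conditions on $\tilde{C}^\infty(\overline{\Omega})$, absorbs every boundary term into $\Xi_0(\alpha_\epsilon,\beta_\epsilon,\varphi)$ plus the remainders $-\epsilon\int v_\epsilon\phi_{ss}$ and $-\epsilon\int\tau_\epsilon\psi_{ss}$. These remainders vanish as $\epsilon\to 0$ thanks to the uniform $L^2$ bound on $v_\epsilon$ and the elementary estimate $|\tau_\epsilon|\leq 1+\epsilon|\kappa_\epsilon|$ combined with the $L^1$ tension bound. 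Theorem \ref{t:Young} immediately handles the linear-in-$\gamma$ terms $\int v_\epsilon \phi_t\to\int\langle\nu,v\rangle\phi_t$ and $\int v_\epsilon\psi_s\to\int\langle\nu,v\rangle\psi_s$ (sublinear, hence vanishing recession), together with the quadratic term $\int H_0(w_\epsilon)\phi_s\to\int\langle\nu,H_0(w)\rangle\phi_s+\int_{\overline{\Omega}}\langle\nu^\infty,w|w|\rangle\phi_s\,d\lambda$, in which $H_0^\infty(w)=w|w|$ supplies the nonzero component of $\mathcal{B}^\infty$. The convergence $\Xi_0(\alpha_\epsilon,\beta_\epsilon,\varphi)\to\Xi_0(\alpha,\beta,\varphi)$ follows from the $L^2$ convergence of the smoothed data.

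The main obstacle is the nonlinear integral $\int\tau_\epsilon\psi_t$. Splitting $\tau_\epsilon=\epsilon\kappa_\epsilon+\kappa_\epsilon/\sqrt{\epsilon+|\kappa_\epsilon|^2}$, the first summand is $O(\epsilon)$ by the $L^1$ tension bound. The second has the form $F_\epsilon(w_\epsilon)$ for the $\epsilon$-dependent family $F_\epsilon(w):=H_0(w)/\sqrt{\epsilon+|H_0(w)|^2}$, to which Theorem \ref{t:Young} does not apply directly. I therefore write $F_\epsilon(w_\epsilon)=(w_\epsilon-H_0^*(w_\epsilon))+\rho_\epsilon$: the principal piece, being a bounded fixed function of $\gamma$ with vanishing recession, passes to the limit via Theorem \ref{t:Young} as $\int\langle\nu,w-H_0^*(w)\rangle\psi_t$, precisely the quantity demanded by \eqref{d:YMS}. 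The residual $\rho_\epsilon$ vanishes identically where $\kappa_\epsilon=0$ and satisfies $|\rho_\epsilon|\leq\epsilon/(|\kappa_\epsilon|^2+\epsilon)$ elsewhere, so it converges to zero uniformly on any region where $|\kappa_\epsilon|$ is bounded below; the delicate step is to verify that its contribution from the transitional zone where $|\kappa_\epsilon|$ is small still vanishes as $\epsilon\to 0$, via a careful cutoff argument exploiting the uniform $L^1$ bound on $\kappa_\epsilon$. Once this last point is settled, assembling every convergence produces \eqref{d:YMS} for every admissible $\varphi$, proving that $(\nu,\lambda,\nu^\infty)$ is the sought admissible Young measure solution.
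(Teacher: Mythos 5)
Your overall strategy (regularize, extract the uniform energy and $L^1$-tension bounds, change variables from $\kappa$ to $w$, generate a generalized Young measure from $\gamma_\epsilon=(v_\epsilon,w_\epsilon)$, and pass to the limit in the weak formulation) coincides with the paper's, and most individual steps are sound. The genuine gap is exactly the point you flag as ``the delicate step'', and it cannot be repaired in the way you propose. Because you define $w_\epsilon$ through the $\epsilon$-\emph{independent} map $h_0$, you get $w_\epsilon-H_0^*(w_\epsilon)=\kappa_\epsilon/|\kappa_\epsilon|$, whereas the quantity actually produced by \eqref{e:v_s(eps)} is $\kappa_\epsilon/\sqrt{\epsilon+|\kappa_\epsilon|^2}$. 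The residual satisfies
\begin{equation*}
|\rho_\epsilon|=\left|\frac{\kappa_\epsilon}{\sqrt{\epsilon+|\kappa_\epsilon|^2}}-\frac{\kappa_\epsilon}{|\kappa_\epsilon|}\right|
=\frac{\epsilon}{\sqrt{\epsilon+|\kappa_\epsilon|^2}\left(\sqrt{\epsilon+|\kappa_\epsilon|^2}+|\kappa_\epsilon|\right)},
\end{equation*}
which is of order one wherever $|\kappa_\epsilon|\lesssim\sqrt{\epsilon}$ (for instance $|\rho_\epsilon|=(2+\sqrt2)^{-1}$ where $|\kappa_\epsilon|=\sqrt{\epsilon}$). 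The uniform $L^1$ bound on $\kappa_\epsilon$ controls, via Chebyshev, the measure of the set where $|\kappa_\epsilon|$ is \emph{large}; it says nothing about the set where $|\kappa_\epsilon|$ is small, which can have full measure. If $\kappa_\epsilon$ lives at the scale $\sqrt{\epsilon}$ on a set of non-vanishing measure, then $\int_\Omega\rho_\epsilon\psi_t$ does not tend to zero and the Young measure generated by your $(v_\epsilon,w_\epsilon)$ fails \eqref{d:YMS}: the limit measure assigns the unit vector $\kappa_\epsilon/|\kappa_\epsilon|$ to the $\mathcal{A}$-term while the equation only supplies the strictly shorter vector $\kappa_\epsilon/\sqrt{\epsilon+|\kappa_\epsilon|^2}$. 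No cutoff based on the available estimates closes this.

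The paper's proof avoids the residual altogether by making the change of variables $\epsilon$-dependent: $w_n=\kappa_n/\sqrt{\varepsilon_n+|\kappa_n|^2}+\kappa_n/\sqrt{|\kappa_n|}=h_{\varepsilon_n}(|\kappa_n|)\,\kappa_n/|\kappa_n|$ with $h_{\varepsilon_n}(r)=r/\sqrt{\varepsilon_n+r^2}+\sqrt r$, so that $\kappa_n=H_{\varepsilon_n}(w_n)$ and $\kappa_n/\sqrt{\varepsilon_n+|\kappa_n|^2}=w_n-H^*_{\varepsilon_n}(w_n)$ hold \emph{exactly}. The error is thereby transferred from the argument of the nonlinearity to the nonlinearity itself, and the key Lemma \ref{l:H-conv} shows that $H_{\varepsilon_n}\to H_0$ and $H^*_{\varepsilon_n}\to H_0^*$ \emph{uniformly on all of} $\mathbb{R}^3$. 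Uniform convergence is precisely what allows one to replace $\mathcal{A}_{\varepsilon_n}(\gamma_n)$ and $\mathcal{B}_{\varepsilon_n}(\gamma_n)$ by $\mathcal{A}(\gamma_n)$ and $\mathcal{B}(\gamma_n)$ with an $o(1)$ error independent of where the merely $L^2$-bounded sequence $\gamma_n$ oscillates or concentrates, after which Theorem \ref{t:Young} applies to the fixed integrands. You need either to adopt this device or to prove a uniform smallness statement for your $\rho_\epsilon$, which, as written, is false.
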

\begin{proof}
Take any sequence $\varepsilon_{n} \to 0.$ The data $(\alpha, \beta)$ can be approximated in $L^{2}(0,1; \mathbb{R}^{6})$ by a sequence of $C^{3}$-functions $(\alpha_{n}, \beta_{n})$ such that $|(\alpha_n)_s(s)|\leq 1$, $(\alpha_{n})_{s}(0) = 0$, $(\alpha_{n})_{ss}(1) = 0$, $\alpha_{n}(1) = 0$, $(\beta_{n})_{s}(0) = 0$, $\beta_{n}(1) = 0$. By Theorem \ref{t:exist-app} there exist smooth solutions $(v_{n}, \tau_{n})$ to \eqref{e:appr-pr} with $\epsilon = \varepsilon_{n}$, $\alpha = \alpha_{n}$, $\beta = \beta_{n}$. Then $(v_{n}, \kappa_{n})$ where $\kappa_{n} = G(\tau_{n})$  is a smooth solution to \eqref{e:app-pr-v-kappa} with $\epsilon = \varepsilon_{n}$, $\alpha = \alpha_{n}$, and $\beta = \beta_{n}$. The uniform energy and tension bounds imply
\begin{align}
& \| v_{n} \|_{L^{\infty}(0, T; L^{2}(0,1))} \leq C, \label{e:v_n-bound} \\
& \| \kappa_{n} \|_{L^{1}(\Omega)} \leq C. \label{e:kappa_n-bound}
\end{align}
Let \begin{equation}\label{e:w_n}
w_{n} = \frac{\kappa_{n}}{\sqrt{\varepsilon_{n} + | \kappa_{n} |^{2}}} + \frac{\kappa_{n}}{\sqrt{| \kappa_{n} |}}.
\end{equation}
Then
\begin{equation} \label{e:w_n-bound}
\| w_{n} \|_{L^{2}(\Omega)} \leq C.
\end{equation}
Consider the function $h_{\varepsilon_{n}} \colon \overline{\mathbb{R}}_{+} \to \overline{\mathbb{R}}_{+}$ defined as
\begin{equation*}
h_{\varepsilon_{n}}(r) = \frac{r}{\sqrt{\varepsilon_{n} + r^{2}}} + \sqrt{r}.
\end{equation*}
We can easily check that this function is strictly increasing. Thus, there exists the inverse function $h_{\varepsilon_{n}}^{-1} \colon \overline{\mathbb{R}}_{+} \to \overline{\mathbb{R}}_{+}$ which is continuous. Observe that $h_{\varepsilon_{n}}^{-1} (0) = 0.$ Let us introduce the functions $H_{\varepsilon_{n}}, H_{\varepsilon_{n}}^{*} \colon \mathbb{R}^{3} \to \mathbb{R}^{3}$ as
\[ H_{\varepsilon_{n}}(\chi) = \frac{\chi}{| \chi |} h_{\varepsilon_{n}}^{-1}(| \chi |), \,\, H_{\varepsilon_{n}}^{*}(\chi) = \frac{\chi}{| \chi |} \sqrt{h_{\varepsilon_{n}}^{-1}(| \chi |)}, \,\,H_{\varepsilon_{n}}(0) = H_{\varepsilon_{n}}^{*}(0) = 0.\]
Observe that these functions are continuous at zero (in fact everywhere). From \eqref{e:w_n} we find that 
\[\kappa_{n} = H_{\varepsilon_{n}}(w_{n})\quad \text{and} \quad \frac{\kappa_{n}}{\sqrt{\varepsilon_{n} + | \kappa |^{2}}} = w_{n} - H_{\varepsilon_{n}}^{*}(w_{n}).\]
Now, \eqref{e:v_t(eps)-1} and \eqref{e:v_s(eps)} imply 
\begin{equation} \label{e:v_n_t}
(v_{n})_{t} = \varepsilon_{n}(v_{n})_{ss} + (H_{\varepsilon_{n}}(w_{n}))_{s} +  g,   \end{equation} and 
\begin{multline}
(v_{n})_{s} = (\varepsilon_{n} H_{\varepsilon_{n}}(w_{n}) + w_{n} - H_{\varepsilon_{n}}^{*}(w_{n}))_{t} \\ - \varepsilon_{n} (\varepsilon_{n} H_{\varepsilon_{n}} (w_{n}) + w_{n} - H_{\varepsilon_{n}}^{*}(w_{n}))_{ss} . \label{e:v_n_s}
\end{multline}
We need the following result to proceed.
\begin{lem}\label{l:H-conv}
We have
\[H_{\varepsilon_{n}}(\chi) \to H_{0}(\chi), \quad H_{\varepsilon_{n}}^{*}(\chi) \to H_{0}^*(\chi)\]
uniformly on $\mathbb{R}^{3}.$
\end{lem}
\begin{proof}
Suppose there exists sequences $\varepsilon_{n_{k}}$ and $\chi_{k}$ such that
\begin{equation*}
| H_{\varepsilon_{n_{k}}}(\chi_{k}) - H_{0}(\chi_{k}) | \geq \delta
\end{equation*}
for some $\delta > 0.$ In the sequel we write ${\varepsilon_{k}}$ instead of ${\varepsilon_{n_{k}}}.$ Due to the above inequality, we get
\begin{equation}\label{e:inv-est}
| h_{\varepsilon_{k}}^{-1}( | \chi_{k} | ) - h_{0}^{-1}(| \chi_{k} |) | \geq \delta.
\end{equation}
Without loss of generality, there exists $\overline\chi  = \underset{k \to \infty}{\lim} | \chi_{k} |$, which can be equal to $+ \infty.$ Assume first that $\overline\chi  \leq 1.$ Then $h_{0}^{-1}(\overline\chi ) = 0$, and, since $ h_{0}^{-1}(| \chi_{k} |)$ is non-negative, we must have $d_{k} :=  h_{\varepsilon_{k}}^{-1}(|\chi_{k}|)  \geq \delta$ for $k$ large enough. Therefore,
\begin{align}
| \chi_{k} | & = h_{\varepsilon_{k}}(d_{k}) = \frac{d_{k}}{\sqrt{\varepsilon_{k} + d_{k}^{2}}} + \sqrt{d_{k}} \nonumber \\
& \geq \frac{\delta}{\sqrt{\varepsilon_{k} + \delta^{2}}} + \sqrt{\delta} \to 1 + \delta \nonumber
\end{align}
which contradicts the assumption $\overline\chi \leq 1.$ Now, consider the case $\overline\chi  >1$. Then without loss of generality $| \chi_{k} | > 1$ for all $k.$ Denote $r_{k} = h_{0}^{-1}(| \chi_{k} |).$ Then, there exist numbers $k_{l}$ for $l = 1, 2, \ldots,$ such that either $r_{k_{l}} \geq d_{k_{l}}$ or $ r_{k_{l}} \leq d_{k_{l}}$ for all $l.$ To simplify the notation, we write $r_{k}$ and $d_{k}$ instead of $r_{k_{l}}$ and $d_{k_{l}}$. Due to \eqref{e:inv-est} we either have $r_{k} \geq d_{k} + \delta$ or $d_{k} \geq r_{k} + \delta.$ In the first case, we have
\begin{align}
\frac{d_{k}}{\sqrt{\varepsilon_{k} + d_{k}^{2}}} + \sqrt{d_{k}} & = 1 + \sqrt{r_{k}} \geq 1 + \sqrt{d_{k} + \delta} \nonumber \\
& \geq \frac{d_{k}}{\sqrt{\varepsilon_{k} + d_{k}^{2}}} + \sqrt{d_{k} + \delta}  \nonumber \\
& > \frac{d_{k}}{\sqrt{\varepsilon_{k} + d_{k}^{2}}} + \sqrt{d_{k}}  \nonumber
\end{align}
which gives a contradiction. In the second case we have 
\begin{align}
1 + \sqrt{r_{k}} & = \frac{d_{k}}{\sqrt{\varepsilon_{k} + d_{k}^{2}}} + \sqrt{d_{k}} \nonumber \\
& \geq \frac{r_{k} + \delta}{\sqrt{\varepsilon_{k} + (r_{k} + \delta)^{2}}} + \sqrt{r_{k} + \delta}. \nonumber 
\end{align}
However, the last inequality cannot hold for all $k$ since by Lagrange's mean value theorem we have
\[\sqrt{r_{k} + \delta} - \sqrt{r_{k}} = \frac{1}{2 \sqrt{r_{k} + c_{0}}} \delta \geq \frac{\delta}{2 \sqrt{r_{k} + \delta}}\]
for some $c_{0} \in (0, \delta),$ and 
\begin{align}
1 - \frac{r_{k} + \delta}{\sqrt{\varepsilon_{k} + (r_{k} + \delta)^{2}}}   & = \frac{\sqrt{\varepsilon_{k} + (r_{k} + \delta)^{2}} - \sqrt{(r_{k} + \delta)^{2}}}{\sqrt{\varepsilon_{k} + (r_{k} + \delta)^{2}}} \nonumber \\
& = \frac{\frac{\varepsilon_{k}}{2 \sqrt{(r_{k} + \delta)^{2} + c_{k}}}}{\sqrt{\varepsilon_{k} + (r_{k} + \delta)^{2}}} \leq \frac{\varepsilon_{k}}{2 (r_{k} + \delta)^{2}}  \nonumber \\
& < \frac{\delta}{2 \sqrt{r_{k} + \delta}} \nonumber
\end{align}
for $k$ large enough and some $c_{k} \in (0, \varepsilon_{k}).$ Similarly, one shows that the inequality
\[| H_{\varepsilon_{n_{k}}}^{*}(\chi_{k}) - H_{0}^{*}(\chi_{k})  | \geq \delta\]
cannot hold. 
\end{proof}

We now return to the proof of the theorem and introduce the functions
$\mathcal{A}_{\varepsilon}, \mathcal{B}_{\varepsilon},$ $\mathcal{D}_{\varepsilon},  \mathcal{D} \colon \mathbb{R}^{6} \to \mathbb{R}^{6}$ as
\begin{align}
& \mathcal{A}_{\varepsilon}(\gamma) = \mathcal{A}_{\varepsilon}(v, w) = (v, w - H_{\varepsilon}^{*}(w)), \nonumber \\
& \mathcal{B}_{\varepsilon}(\gamma) = (H_{\varepsilon}(w), v), \nonumber  \\
& \mathcal{D}_{\varepsilon}(\gamma) = (0, H_{\varepsilon}(w)), \nonumber \\
& \mathcal{D}(\gamma) = (0, H_{0}(w)). \nonumber
\end{align}
Let $\gamma_{n} = (v_{n}, w_{n}).$ Then, \eqref{e:v_n_t} and \eqref{e:v_n_s} may be rewritten as 
\begin{eqnarray}\label{e:AB-eps}
&& (\mathcal{A}_{\varepsilon_{n}})_{t}(\gamma_{n}) + \varepsilon_{n} (\mathcal{D}_{\varepsilon_{n}})_{t}(\gamma_{n}) = \\
&& \qquad  \quad  = (\mathcal{B}_{\varepsilon_{n}})_{s}(\gamma_{n}) + \varepsilon_{n} (\mathcal{A}_{\varepsilon_{n}})_{ss}(\gamma_{n}) + \varepsilon_{n}^{2} (\mathcal{D}_{\varepsilon_{n}})_{ss}(\gamma_{n}) + (g, 0). \nonumber
\end{eqnarray}
Moreover, by \eqref{e:v_n_t} and \eqref{e:v_n_s}, the initial and boundary conditions \eqref{e:bc(eps)-s} - \eqref{e:bc(eps)-e}, and the restriction $\alpha_{n}(1) = 0$, we find that for any $\varphi = (\phi, \psi) \in \tilde{C}^{\infty}(\Omega)$ we have
\begin{eqnarray*}
&& \int_{\Omega} v_{n} \phi_{t} \,ds\,dt  = \int_{\Omega} H_{\varepsilon_{n}}(w_{n}) \phi_{s}\,ds\,dt - \int_{0}^{1} \beta_{n} \phi \vert_{t = 0}\,ds \\
&& \qquad  \qquad \qquad \qquad  - \int_{\Omega} g \phi\,ds\,dt - \varepsilon_{n} \int_{\Omega} v_{n} \phi_{ss}\,ds\,dt, \\
&& \int_{\Omega} [w_{n} - H_{\varepsilon_{n}}^{*}(w_{n}) + \varepsilon_{n} H_{\varepsilon_{n}}(w_{n})] \psi_{t} \,ds\,dt =  \int_{\Omega} v_{n} \psi_{s}\,ds\,dt \\
& & \qquad  + \int_{0}^{1} \alpha_{n} \psi_{s}\vert_{t = 0} \,ds - \varepsilon_{n} \int_{\Omega} [w_{n} - H_{\varepsilon_{n}}^{*}(w_{n}) + \varepsilon_{n} H_{\varepsilon_{n}}(w_{n})] \psi_{ss}\,ds\,dt.
\end{eqnarray*}
These can be merged to give
\begin{align}
& \int_{\Omega} \mathcal{A}_{\varepsilon_{n}}(\gamma_{n}) \varphi_{t}\,ds\,dt + \varepsilon_{n} \int_{\Omega} \mathcal{D}_{\varepsilon_{n}}(\gamma_{n}) \varphi_{t}\,ds\,dt  = \nonumber \\
&\qquad \qquad  \qquad =  \int_{\Omega} \mathcal{B}_{\varepsilon_{n}}(\gamma_{n}) \varphi_{s}\,ds\,dt - \varepsilon_{n} \int_{\Omega} \mathcal{A}_{\varepsilon_{n}}(\gamma_{n}) \varphi_{ss}\,ds\,dt  \nonumber \\
& \qquad \qquad \qquad \qquad - \varepsilon_{n}^{2} \int_{\Omega} \mathcal{D}_{\varepsilon_{n}}(\gamma_{n}) \varphi_{ss}\,ds\,dt + \Xi_{0} (\alpha_{n}, \beta_{n}, \varphi). \label{e:merged-ABD}
\end{align}
Due to \eqref{e:v_n-bound} and \eqref{e:w_n-bound} we have
\begin{equation}\label{e:gamma_n-bound}
\| \gamma_{n} \|_{L^{2}(\Omega; \mathbb{R}^{6})} \leq C.
\end{equation} Observe that this constant merely depends on $T$, $g$, and the $L^2$-norms of $\alpha$ and $\beta$.

By Lemma \ref{l:H-conv} we obtain
\begin{align}
& \mathcal{A}_{\varepsilon_{n}}(\gamma) \to \mathcal{A}(\gamma) \label{e:A-conv}, \\
& \mathcal{B}_{\varepsilon_{n}}(\gamma) \to \mathcal{B}(\gamma) \label{e:B-conv}, \\
& \mathcal{D}_{\varepsilon_{n}}(\gamma) \to \mathcal{D}(\gamma),\label{e:D-conv}
\end{align}
uniformly in $\gamma \in \mathbb{R}^{6}.$ From \eqref{e:merged-ABD} we infer 
\begin{equation}\label{e:appr-eq}
\begin{split}
& \int_{\Omega} \mathcal{A}(\gamma_{n}) \psi_{t}\,ds\,dt - \int_{\Omega} \mathcal{B}(\gamma_{n}) \varphi_{s}\,ds\,dt - \Xi_{0}(\alpha, \beta, \varphi)  = \\
& \qquad = \int_{\Omega} [ \mathcal{A}(\gamma_{n}) - \mathcal{A}_{\varepsilon_{n}}(\gamma_{n})] \varphi_{t}\,ds\,dt  \\
& \qquad \quad  + \int_{\Omega} [ \mathcal{B}_{\varepsilon_{n}}(\gamma_{n}) - \mathcal{B}(\gamma_{n})] \varphi_{s}\,ds\,dt  \\ 
& \qquad  \quad - {\varepsilon_{n}} \int_{\Omega} [\mathcal{D}_{\varepsilon_{n}}(\gamma_{n}) - \mathcal{D}(\gamma_{n})] \varphi_{t}\,ds\,dt  \\
& \qquad \quad - {\varepsilon_{n}} \int_{\Omega} [ \mathcal{A}_{\varepsilon_{n}}(\gamma_{n}) - \mathcal{A}(\gamma_{n})] \varphi_{ss}\,ds\,dt  \\
& \qquad \quad - \varepsilon_{n}^{2} \int_{\Omega} [\mathcal{D}_{\varepsilon_{n}}(\gamma_{n}) - \mathcal{D}(\gamma_{n})] \varphi_{ss}\,ds\,dt  \\
& \qquad \quad - {\varepsilon_{n}} \int_{\Omega} \mathcal{D}(\gamma_{n}) \varphi_{t}\,ds\,dt - {\varepsilon_{n}} \int_{\Omega} \mathcal{A}(\gamma_{n}) \varphi_{ss}\,ds\,dt  \\
& \qquad \quad - \varepsilon_{n}^{2} \int_{\Omega} \mathcal{D}(\gamma_{n}) \varphi_{ss}\,ds\,dt + \Xi_{0}(\alpha_{n} - \alpha, \beta_{n} - \beta, \varphi). 
\end{split}
\end{equation}
The first five terms on the right-hand side tend to zero by \eqref{e:A-conv} -- \eqref{e:D-conv}. Since $\mathcal{A}$ and $\mathcal{D}$ are sublinear and subquadratic, respectively, \eqref{e:gamma_n-bound} gives
\begin{align}
& \| \mathcal{A}(\gamma_{n}) \|_{L^{2}(\Omega ; \mathbb{R}^{6})} \leq C, \nonumber \\
& \| \mathcal{D}(\gamma_{n}) \|_{L^{1}(\Omega ; \mathbb{R}^{6})} \leq C . \nonumber
\end{align}
Recall that $\alpha_{n} \to \alpha$, $\beta_{n} \to \beta$ in $L^{2}(0,1; \mathbb{R}^{3})$. Hence, we conclude that the remaining terms on the right-hand side of \eqref{e:appr-eq}  go to zero. Consider the functions 
\begin{align}
& \tilde{\mathcal{A}}(t, s, \xi) = \mathcal{A}(\xi) \varphi_{t}(t,s), \nonumber \\
& \tilde{\mathcal{B}}(t, s, \xi) = \mathcal{B}(\xi) \varphi_{s}(t,s) .\nonumber
\end{align}
It is easy to see that $\tilde{\mathcal{A}}$ and $\tilde{\mathcal{B}}$ are in the class $\mathcal{F}_{2},$ $\tilde{\mathcal{A}}^{\infty} \equiv 0,$ and $\tilde{\mathcal{B}}^{\infty}(t, s, \xi) = \mathcal{B}^{\infty}(\xi) \varphi_{s}(t, s).$ By Theorem \ref{t:Young}, we can pass to the limit in \eqref{e:appr-eq} (passing to a subsequence, if necessary) and obtain
\begin{eqnarray}
& &  \int_{\Omega} \langle \nu_{t,s}, \tilde{\mathcal{A}}(t, s, \xi) \rangle  \,ds\,dt - \int_{\Omega} \langle \nu_{t,s} , \tilde{\mathcal{B}}(t, s, \xi) \rangle \,ds\,dt \nonumber \\
& & \quad \quad   - \int_{\overline{\Omega}} \langle \nu_{t,s}^{\infty}, \tilde{\mathcal{B}}^{\infty}(t, s, \theta)\rangle\,\lambda(dt, ds) 
  - \Xi_{0}(\alpha, \beta, \varphi)=0,
\end{eqnarray}
which yields \eqref{d:YMS}. Remark \ref{remme} and \eqref{e:gamma_n-bound} imply \eqref{admc}.
\end{proof}

\subsection{Examples} \label{s:52} Let us briefly examine the implications of Theorem \ref{maint} for some particular cases of chain dynamics with the ``whip'' boundary conditions and non-zero gravity $g$. In the case of the initial data \eqref{e:unst}, we get existence of a generalized solution which is a priori different from the stationary solution \eqref{e:ss} plainly because the latter one does not admit non-negative tension. A qualitative glimpse at the auxiliary problems \eqref{e:app-pr-v-kappa} and \eqref{e:appr-pr} implies that the ``approximate strings'' start to evolve close to the upright position \eqref{e:unst} but eventually with the course of time they approach their steady-states. As $\epsilon$ goes to zero, these steady-states approach the downwards vertical orientation with \begin{equation}\label{e:ss1} v(s)=0,\ \kappa(s)=-gs.\end{equation}  Hence, our solution must be relevant in connection with the problem of falling of a chain which is initially in an upright position and then its upper end is released and the lower one remains fixed. 

On the other hand, there are many physical and mechanical works dealing with a problem of falling of a chain which initially has two ends together and then one of them is released (see \cite{Wong} for a review). In this case, the initial data are \begin{equation} \label{e:unstwo} \alpha(s)=g\left(\frac 1 {2|g|} - \left|\frac s {|g|}-\frac 1 {2|g|}\right|\right),\,\beta(s)=0. \end{equation}
Although the compatibility condition \eqref{e-al1} is violated for $s=\frac 1 2$, the hypothesis of Theorem \ref{maint} is met. Thus, the Young measure solution exists, providing a new framework for a correct description of this mechanical system. 

\bibliographystyle{abbrv}
\bibliography{bibliography-sv}

\end{document}